\newtheorem{theorem}{Theorem}
\theoremstyle{plain}
\newtheorem{corollary}{Corollary}
\newtheorem{definition}{Definition}
\newtheorem{lemma}{Lemma}
\newtheorem{proposition}{Proposition}
\newtheorem{remark}{Remark}
\numberwithin{equation}{section}
\begin{document}
\title[New general integral inequalities]{New general integral inequalities
for Lipschitzian functions via Hadamard fractional integrals}
\author{\.{I}mdat \.{I}\c{s}can}
\address{Department of Mathematics, Faculty of Sciences and Arts, Giresun
University, Giresun, Turkey}
\email{imdat.iscan@giresun.edu.tr, imdati@yahoo.com}
\subjclass[2000]{ 26A33, 26A51, 26D15. }
\keywords{Lipschitzian function, Hermite-Hadamard type inequalities,
Ostrowski type inequality, Simpson type inequality, Hadamard fractional
integrals.}

\begin{abstract}
In this paper, the author obtains new estimates on generalization of
Hadamard, Ostrowski and Simpson type inequalities for Lipschitzian functions
via Hadamard fractional integrals. Some applications to special means of
positive reals numbers are also given.
\end{abstract}

\maketitle

\section{Introduction}

Let real function $f$ be defined on some nonempty interval $I$ of real line $%
\mathbb{R}
$. The function $f$ is said to be convex on $I$ if inequality%
\begin{equation}
f(tx+(1-t)y)\leq tf(x)+(1-t)f(y)  \label{1-0}
\end{equation}%
holds for all $x,y\in I$ and $t\in \left[ 0,1\right] .$

Following inequalities are well known in the literature as Hermite-Hadamard
inequality, Ostrowski inequality and Simpson inequality respectively:

\begin{theorem}
Let $f:I\subseteq \mathbb{R\rightarrow R}$ be a convex function defined on
the interval $I$ of real numbers and $a,b\in I$ with $a<b$. The following
double inequality holds%
\begin{equation}
f\left( \frac{a+b}{2}\right) \leq \frac{1}{b-a}\dint\limits_{a}^{b}f(x)dx%
\leq \frac{f(a)+f(b)}{2}\text{.}  \label{1-1}
\end{equation}
\end{theorem}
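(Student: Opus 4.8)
The plan is to prove the two inequalities in \eqref{1-1} separately, in each case reducing the integral over $[a,b]$ to an integral over the parameter interval $[0,1]$ via the affine substitution $x=ta+(1-t)b$, under which $dx=(a-b)\,dt$ and the limits $t=0,1$ correspond to $x=b,a$, yielding the identity
\begin{equation*}
\frac{1}{b-a}\int_{a}^{b}f(x)\,dx=\int_{0}^{1}f\bigl(ta+(1-t)b\bigr)\,dt.
\end{equation*}
This change of variables is the one genuinely computational step, and everything else will follow by applying the defining convexity inequality \eqref{1-0} pointwise in $t$ and then integrating, using that integration preserves inequalities.

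For the right-hand inequality I would start from \eqref{1-0} with $x=a$, $y=b$, giving $f(ta+(1-t)b)\leq tf(a)+(1-t)f(b)$ for every $t\in[0,1]$. Integrating both sides over $t\in[0,1]$ and evaluating the elementary right-hand integral $\int_{0}^{1}\bigl(tf(a)+(1-t)f(b)\bigr)\,dt=\tfrac{1}{2}\bigl(f(a)+f(b)\bigr)$, while rewriting the left-hand integral through the substitution above, delivers $\frac{1}{b-a}\int_{a}^{b}f(x)\,dx\leq \frac{f(a)+f(b)}{2}$ directly.

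For the left-hand inequality I would exploit that the midpoint can be written symmetrically as $\frac{a+b}{2}=\frac{1}{2}\bigl(ta+(1-t)b\bigr)+\frac{1}{2}\bigl((1-t)a+tb\bigr)$ for every $t$. Applying \eqref{1-0} to this convex combination gives
\begin{equation*}
f\Bigl(\frac{a+b}{2}\Bigr)\leq \frac{1}{2}f\bigl(ta+(1-t)b\bigr)+\frac{1}{2}f\bigl((1-t)a+tb\bigr),
\end{equation*}
and integrating over $t\in[0,1]$ collapses both terms on the right to the same average $\frac{1}{b-a}\int_{a}^{b}f(x)\,dx$ (the second by the symmetric substitution $x=(1-t)a+tb$), producing the desired lower bound. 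The only point requiring care is bookkeeping of the orientation of the substitution so that the factor $b-a>0$ enters with the correct sign; beyond that the argument is routine, since convexity supplies both the chord bound from above and the supporting-midpoint bound from below.
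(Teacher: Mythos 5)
Your argument is correct: the affine substitution $x=ta+(1-t)b$ correctly identifies $\frac{1}{b-a}\int_a^b f(x)\,dx$ with $\int_0^1 f(ta+(1-t)b)\,dt$, the chord bound integrates to the right-hand inequality, and the symmetric midpoint decomposition integrates to the left-hand one; this is the standard proof of the Hermite--Hadamard inequality. Note, however, that the paper states this theorem only as classical background and supplies no proof of its own, so there is nothing to compare your approach against; your write-up fills that gap with the canonical argument (the only unstated point being that a convex function on $[a,b]$ is integrable, which is routine).
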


\begin{theorem}
Let $f:I\subseteq \mathbb{R\rightarrow R}$ be a mapping differentiable in $%
I^{\circ },$ the interior of I, and let $a,b\in I^{\circ }$ with $a<b.$ If $%
\left\vert f^{\prime }(x)\right\vert \leq M,$ $x\in \left[ a,b\right] ,$
then we the following inequality holds%
\begin{equation*}
\left\vert f(x)-\frac{1}{b-a}\dint\limits_{a}^{b}f(t)dt\right\vert \leq 
\frac{M}{b-a}\left[ \frac{\left( x-a\right) ^{2}+\left( b-x\right) ^{2}}{2}%
\right]
\end{equation*}%
for all $x\in \left[ a,b\right] .$
\end{theorem}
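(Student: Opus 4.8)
The plan is to reduce the estimate to a single integral identity—the classical Montgomery identity—and then bound that integral crudely using the hypothesis $|f'|\leq M$. First I would introduce the Peano kernel $p(x,\cdot)$ on $[a,b]$ defined by $p(x,t)=t-a$ for $t\in[a,x]$ and $p(x,t)=t-b$ for $t\in(x,b]$, and assert the identity
\[
f(x)-\frac{1}{b-a}\int_a^b f(t)\,dt=\frac{1}{b-a}\int_a^b p(x,t)f'(t)\,dt .
\]

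To verify the identity I would split the right-hand integral at the point $x$ and integrate each of the two pieces by parts, writing $f'(t)\,dt=d\bigl(f(t)\bigr)$. The boundary term from $\int_a^x(t-a)f'(t)\,dt$ contributes $(x-a)f(x)$, while the boundary term from $\int_x^b(t-b)f'(t)\,dt$ contributes $(b-x)f(x)$ (here one must track the sign of $(t-b)$ carefully); these combine to $(b-a)f(x)$. The two remaining integral terms reassemble to $-\int_a^b f(t)\,dt$, and dividing through by $b-a$ delivers the displayed identity. This is the one step that requires genuine care, and it is the main potential pitfall of the argument.

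With the identity established, I would take absolute values, apply the triangle inequality for integrals, and insert the bound $|f'(t)|\leq M$:
\[
\left|f(x)-\frac{1}{b-a}\int_a^b f(t)\,dt\right|\leq\frac{1}{b-a}\int_a^b |p(x,t)|\,|f'(t)|\,dt\leq\frac{M}{b-a}\int_a^b |p(x,t)|\,dt .
\]

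Finally I would compute the kernel integral explicitly. Since $|p(x,t)|=t-a\geq 0$ on $[a,x]$ and $|p(x,t)|=b-t\geq 0$ on $[x,b]$, the two elementary integrals evaluate to $(x-a)^2/2$ and $(b-x)^2/2$, so that $\int_a^b|p(x,t)|\,dt=\tfrac{1}{2}\bigl[(x-a)^2+(b-x)^2\bigr]$. Substituting this into the previous estimate yields exactly the claimed bound, and I expect the only real obstacle to be the correct bookkeeping in the Montgomery identity; once that is in hand, the remaining estimate and the final elementary integration are immediate.
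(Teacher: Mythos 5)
Your argument is correct: the Montgomery identity is verified exactly as you describe (the boundary terms $(x-a)f(x)$ and $-(x-b)f(x)$ do combine to $(b-a)f(x)$), and since $|f'|\leq M$ on $[a,b]$ makes $f$ Lipschitz and hence absolutely continuous, the integration by parts is legitimate; the kernel integral $\int_a^b |p(x,t)|\,dt=\tfrac{1}{2}\bigl[(x-a)^2+(b-x)^2\bigr]$ then gives precisely the stated bound. Note that the paper states this Ostrowski inequality only as classical background and offers no proof of its own, so there is nothing to compare against; your route is the standard one found in the literature it cites.
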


\begin{theorem}
Let $f:\left[ a,b\right] \mathbb{\rightarrow R}$ be a four times
continuously differentiable mapping on $\left( a,b\right) $ and $\left\Vert
f^{(4)}\right\Vert _{\infty }=\underset{x\in \left( a,b\right) }{\sup }%
\left\vert f^{(4)}(x)\right\vert <\infty .$ Then the following inequality
holds:%
\begin{equation*}
\left\vert \frac{1}{3}\left[ \frac{f(a)+f(b)}{2}+2f\left( \frac{a+b}{2}%
\right) \right] -\frac{1}{b-a}\dint\limits_{a}^{b}f(x)dx\right\vert \leq 
\frac{1}{2880}\left\Vert f^{(4)}\right\Vert _{\infty }\left( b-a\right) ^{4}.
\end{equation*}
\end{theorem}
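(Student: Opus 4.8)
The plan is to recognize the left-hand side as, up to the factor $1/(b-a)$, the error of Simpson's quadrature rule and to control it through a Peano-kernel representation. Writing $m=(a+b)/2$ and
\[
E(f)=\frac{b-a}{6}\left[f(a)+4f(m)+f(b)\right]-\int_a^b f(x)\,dx ,
\]
the quantity to be bounded is exactly $\tfrac{1}{b-a}\left|E(f)\right|$, since $\tfrac13\bigl[\tfrac{f(a)+f(b)}{2}+2f(m)\bigr]=\tfrac16\bigl[f(a)+4f(m)+f(b)\bigr]$. Thus it suffices to prove $\left|E(f)\right|\le \dfrac{(b-a)^5}{2880}\left\|f^{(4)}\right\|_\infty$ and then divide by $b-a$.

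First I would record the decisive fact that $E$ annihilates every polynomial of degree at most three. Exactness on quadratics is built into Simpson's rule, and exactness on the cubic follows from the symmetry of the nodes $a,m,b$ about the midpoint $m$: the weighted node values cancel in pairs while $\int_a^b (x-m)^3\,dx=0$. This vanishing on $\mathcal{P}_3$ is precisely what permits a fourth-order estimate.

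Next I would expand $f$ by Taylor's formula with integral remainder about $a$,
\[
f(x)=P(x)+\frac{1}{3!}\int_a^b (x-t)_+^3\,f^{(4)}(t)\,dt ,\qquad P\in\mathcal{P}_3,\ (x-t)_+=\max\{x-t,0\},
\]
apply $E$ to both sides, use $E(P)=0$, and interchange $E$ with the $t$-integral by Fubini to reach the Peano representation $E(f)=\int_a^b K(t)\,f^{(4)}(t)\,dt$, where $K(t)=\tfrac{1}{6}\,E_x\!\bigl((x-t)_+^3\bigr)$. Evaluating $E$ in the variable $x$ at the three nodes and against $\int_a^b(x-t)_+^3\,dx=\tfrac14(b-t)^4$ shows $K$ to be an explicit piecewise cubic vanishing at $a$ and $b$. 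The bound $\left|E(f)\right|\le \left\|f^{(4)}\right\|_\infty\int_a^b |K(t)|\,dt$ is then immediate.

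The hard part will be evaluating $\int_a^b |K(t)|\,dt$, which hinges on showing that $K$ does not change sign on $[a,b]$ (a sample evaluation such as $K(m)=\tfrac{1}{72}\bigl(\tfrac{b-a}{2}\bigr)^4>0$ suggests $K\ge 0$). Granting sign-definiteness, $\int_a^b|K(t)|\,dt=\bigl|\int_a^b K(t)\,dt\bigr|=\tfrac{1}{4!}\,|E(x^4)|$, and a direct computation gives $|E(x^4)|=\dfrac{(b-a)^5}{120}$, so that $\int_a^b|K(t)|\,dt=\dfrac{(b-a)^5}{2880}$; equivalently, the mean value theorem for integrals produces $\xi\in(a,b)$ with $E(f)=\dfrac{(b-a)^5}{2880}f^{(4)}(\xi)$. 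Verifying the constant sign of the piecewise-cubic $K$ is the genuine obstacle, but once it is settled, combining the displayed inequality with the value of $\int_a^b|K|$ and dividing by $b-a$ yields the claimed estimate.
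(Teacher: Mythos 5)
The paper does not actually prove this statement: it is quoted in the introduction as the classical Simpson inequality, i.e.\ as known background, so there is no internal proof to compare yours against. Your Peano--kernel argument is the standard textbook route and it is correct. The reduction of the left-hand side to $\frac{1}{b-a}\left\vert E(f)\right\vert$, the exactness of $E$ on $\mathcal{P}_{3}$, the representation $E(f)=\int_{a}^{b}K(t)f^{(4)}(t)\,dt$ with $K(t)=\frac{1}{6}E_{x}\bigl((x-t)_{+}^{3}\bigr)$, and the evaluation $\frac{1}{4!}\left\vert E(x^{4})\right\vert =\frac{(b-a)^{5}}{2880}$ are all right. The one step you leave open --- the sign-definiteness of $K$ --- does hold and is a short verification rather than a genuine obstacle: normalizing to $[0,h]$ with $m=h/2$, for $t\in \lbrack h/2,h]$ one has $K(t)=\frac{(h-t)^{3}}{6}\bigl(\frac{h}{6}-\frac{h-t}{4}\bigr)\geq \frac{(h-t)^{3}}{6}\cdot \frac{h}{24}\geq 0$, while for $t=sh$ with $s\in \lbrack 0,1/2]$ one gets $K(t)=\frac{h^{4}}{6}\phi (s)$ with $\phi (s)=\frac{1}{6}\bigl(4(\tfrac{1}{2}-s)^{3}+(1-s)^{3}\bigr)-\frac{(1-s)^{4}}{4}$, for which $\phi (0)=\phi ^{\prime }(0)=\phi ^{\prime \prime }(0)=0$ and $\phi ^{\prime \prime \prime }(s)=1-6s$, hence $\phi ^{\prime }(s)=s^{2}(\tfrac{1}{2}-s)\geq 0$ and $\phi \geq 0$; alternatively, $K(t)=K(a+b-t)$ by the symmetry of the rule, so the computation on one half suffices. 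With $K\geq 0$ settled, your identity $\int_{a}^{b}K(t)\,dt=\frac{(b-a)^{5}}{2880}$ closes the proof, and dividing by $b-a$ gives the stated bound. (A cosmetic point: the Taylor expansion about $a$ uses derivatives at the endpoint, whereas the hypothesis only gives $C^{4}$ on $(a,b)$; work on $[a+\varepsilon ,b-\varepsilon ]$ and let $\varepsilon \rightarrow 0$.)
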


In recent years, many athors have studied errors estimations for
Hermite-Hadamard, Ostrowski and Simpson inequalities; for refinements,
counterparts, generalization see \cite%
{ADDC10,AKO11,I13,I13b,I13c,SOS12,SSO10} and references therein.

The following definitions are well known in the literature.

\begin{definition}
A function $f:I\subseteq \mathbb{R\rightarrow R}$ is called an $M$%
-Lipschitzian function on the interval $I$ of real numbers with $M\geq 0$,
if 
\begin{equation*}
\left\vert f(x)-f(y)\right\vert \leq M\left\vert x-y\right\vert
\end{equation*}%
for all $x,y\in I.$
\end{definition}

For some recent results connected with Hermite-Hadamard type integral
inequalities for Lipschitzian functions, see \cite{DCK00,HHT13,THH12,YT01}.

\begin{definition}[\protect\cite{N00,N03}]
A function $f:I\subseteq \left( 0,\infty \right) \rightarrow 
\mathbb{R}
$ is said to be GA-convex (geometric-arithmatically convex) if%
\begin{equation*}
f(x^{t}y^{1-t})\leq tf(x)+\left( 1-t\right) f(y)
\end{equation*}%
for all $x,y\in I$ and $t\in \left[ 0,1\right] $.
\end{definition}

We will now give definitions of the right-sided and left-sided Hadamard
fractional integrals which are used throughout this paper.

\begin{definition}
Let $f\in L\left[ a,b\right] $. The right-sided and left-sided Hadamard
fractional integrals $J_{a^{+}}^{\alpha }f$ and $J_{b^{-}}^{\alpha }f$ of
oder $\alpha >0$ with $b>a\geq 0$ are defined by

\begin{equation*}
J_{a+}^{\alpha }f(x)=\frac{1}{\Gamma (\alpha )}\dint\limits_{a}^{x}\left(
\ln \frac{x}{t}\right) ^{\alpha -1}f(t)\frac{dt}{t},\ a<x<b
\end{equation*}

and

\begin{equation*}
J_{b-}^{\alpha }f(x)=\frac{1}{\Gamma (\alpha )}\dint\limits_{x}^{b}\left(
\ln \frac{t}{x}\right) ^{\alpha -1}f(t)\frac{dt}{t},\ a<x<b
\end{equation*}%
respectively, where $\Gamma (\alpha )$ is the Gamma function defined by $%
\Gamma (\alpha )=$ $\dint\limits_{0}^{\infty }e^{-t}t^{\alpha -1}dt$ (see 
\cite{KST06}).
\end{definition}

In \cite{I13d}, Iscan established Hermite-Hadamard's inequalities for
GA-convex functions in Hadamard fractional integral forms as follows.

\begin{theorem}
Let $f:I\subseteq \left( 0,\infty \right) \rightarrow 
\mathbb{R}
$ be a function such that $f\in L[a,b]$, where $a,b\in I$ with $a<b$. If $f$
is a GA-convex function on $[a,b]$, then the following inequalities for
fractional integrals hold:%
\begin{equation}
f\left( \sqrt{ab}\right) \leq \frac{\Gamma (\alpha +1)}{2\left( \ln \frac{b}{%
a}\right) ^{\alpha }}\left\{ J_{a+}^{\alpha }f(b)+J_{b-}^{\alpha
}f(a)\right\} \leq \frac{f(a)+f(b)}{2}  \label{1-2}
\end{equation}%
with $\alpha >0$.
\end{theorem}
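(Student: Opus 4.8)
The plan is to establish the Hermite--Hadamard inequalities (\ref{1-2}) by exploiting the defining inequality of GA-convexity together with a substitution that turns the Hadamard fractional integrals into ordinary integrals over $[0,1]$ against the weight $t^{\alpha-1}$. The key change of variables is $x=a^{1-t}b^{t}$ (equivalently $t=\ln(x/a)/\ln(b/a)$), under which $\ln(b/x)=(1-t)\ln(b/a)$, $\ln(x/a)=t\ln(b/a)$, and $dx/x = \ln(b/a)\,dt$. Applying this to the two Hadamard integrals should yield
\begin{equation*}
J_{a+}^{\alpha}f(b)+J_{b-}^{\alpha}f(a)
=\frac{\left(\ln\frac{b}{a}\right)^{\alpha}}{\Gamma(\alpha)}
\int_{0}^{1}t^{\alpha-1}\bigl[f(a^{1-t}b^{t})+f(a^{t}b^{1-t})\bigr]\,dt,
\end{equation*}
so that the middle term of (\ref{1-2}) becomes $\tfrac{\alpha}{2}\int_{0}^{1}t^{\alpha-1}\bigl[f(a^{1-t}b^{t})+f(a^{t}b^{1-t})\bigr]\,dt$, using $\Gamma(\alpha+1)=\alpha\Gamma(\alpha)$.

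First I would prove the \textbf{right-hand inequality}. For the point $a^{1-t}b^{t}=b^{t}a^{1-t}$, GA-convexity gives $f(a^{1-t}b^{t})\le (1-t)f(a)+t f(b)$ and symmetrically $f(a^{t}b^{1-t})\le t f(a)+(1-t)f(b)$; adding them makes the bracket $\le f(a)+f(b)$ independently of $t$. Then $\tfrac{\alpha}{2}\int_{0}^{1}t^{\alpha-1}\bigl[f(a^{1-t}b^{t})+f(a^{t}b^{1-t})\bigr]\,dt \le \tfrac{f(a)+f(b)}{2}\cdot\alpha\int_{0}^{1}t^{\alpha-1}\,dt = \tfrac{f(a)+f(b)}{2}$, since $\alpha\int_{0}^{1}t^{\alpha-1}\,dt=1$, which is exactly the upper bound.

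Next I would prove the \textbf{left-hand inequality}. The natural route is to write $\sqrt{ab}=\bigl(a^{1-t}b^{t}\bigr)^{1/2}\bigl(a^{t}b^{1-t}\bigr)^{1/2}$, i.e. $\sqrt{ab}$ is the geometric mean of the two running points $a^{1-t}b^{t}$ and $a^{t}b^{1-t}$ with weight $\tfrac12$. Applying GA-convexity at parameter $\tfrac12$ to these two points yields $f(\sqrt{ab})\le \tfrac12\bigl[f(a^{1-t}b^{t})+f(a^{t}b^{1-t})\bigr]$ for every $t$. Multiplying by the probability density $\alpha t^{\alpha-1}$ on $[0,1]$ and integrating preserves this inequality, giving $f(\sqrt{ab})\le \tfrac{\alpha}{2}\int_{0}^{1}t^{\alpha-1}\bigl[f(a^{1-t}b^{t})+f(a^{t}b^{1-t})\bigr]\,dt$, which is the middle term.

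The routine parts are the substitution bookkeeping and the normalization $\alpha\int_0^1 t^{\alpha-1}\,dt=1$. The \textbf{main obstacle}, such as it is, lies in setting up the change of variables cleanly and recognizing the two-point symmetrization so that both the left-point bound (via weight $\tfrac12$) and the right-point bound (via the GA-convex estimate at the genuine parameter $t$) fall out of the single symmetrized integrand $f(a^{1-t}b^{t})+f(a^{t}b^{1-t})$; once that representation is in hand, both inequalities follow by pointwise-in-$t$ estimates and monotonicity of the integral, with no delicate analysis required.
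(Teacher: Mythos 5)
Your proof is correct and complete: the substitution $t=a^{1-s}b^{s}$ converting both Hadamard fractional integrals into $\int_{0}^{1}s^{\alpha-1}[f(a^{1-s}b^{s})+f(a^{s}b^{1-s})]\,ds$ up to the factor $(\ln\frac{b}{a})^{\alpha}/\Gamma(\alpha)$, followed by the two pointwise GA-convexity estimates (at parameter $\tfrac12$ for the left inequality and at parameter $t$ for the right one), is exactly the standard argument for this fractional Hermite--Hadamard inequality. Note that the present paper states this theorem without proof, citing \cite{I13d}, where essentially the same symmetrization argument is carried out.
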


In the inequality (\ref{1-2}), if we take $\alpha =1,$ then we have the
following inequality%
\begin{equation}
f\left( \sqrt{ab}\right) \leq \frac{1}{\ln b-\ln a}\dint\limits_{a}^{b}\frac{%
f(t)}{t}dt\leq \frac{f(a)+f(b)}{2}.  \label{1-3}
\end{equation}

\bigskip Morever in \cite{I13d}, Iscan obtained a generalization of
Hadamard, Ostrowski and Simpson type inequalities for quasi-geometrically
convex functions via Hadamard fractional integrals as related the inequality
(\ref{1-2}).

In this paper, the author obtains new general inequalities for Lipschitzian
functions via Hadamard fractional integrals as related the inequality (\ref%
{1-2}).

\ 

\section{Main Results}

Let $f:I\subseteq \left( 0,\infty \right) \rightarrow 
\mathbb{R}
$ be a differentiable function on $I^{\circ }$, the interior of $I$,
throughout this section we will take%
\begin{eqnarray*}
&&I_{f}\left( x,\lambda ,\alpha ,a,b\right) =\left( 1-\lambda \right) \left[
\ln ^{\alpha }\frac{x}{a}+\ln ^{\alpha }\frac{b}{x}\right] f(x) \\
&&+\lambda \left[ f(a)\ln ^{\alpha }\frac{x}{a}+f(b)\ln ^{\alpha }\frac{b}{x}%
\right] -\Gamma \left( \alpha +1\right) \left[ J_{x-}^{\alpha
}f(a)+J_{x+}^{\alpha }f(b)\right]
\end{eqnarray*}%
and 
\begin{eqnarray*}
&&S_{f}\left( x,y,\lambda ,\alpha ,a,b\right) \\
&=&\lambda ^{\alpha }f(x)+\left( 1-\lambda \right) ^{\alpha }f(y)-\frac{%
\Gamma \left( \alpha +1\right) }{\ln ^{\alpha }\frac{b}{a}}\left[
J_{C-}^{\alpha }f(a)+J_{C+}^{\alpha }f(b)\right]
\end{eqnarray*}%
where $a,b\in I$ with $a<b$, $\ x,y\in \lbrack a,b]$ , $\lambda \in \left[
0,1\right] $, $C=a^{1-\lambda }b^{\lambda }$, $\alpha >0$ and $\Gamma $ is
Euler Gamma function.

\begin{theorem}
\label{2.1}Let $f:$ $I\subseteq \left( 0,\infty \right) \rightarrow 
\mathbb{R}
$ be a $M$-Lipschitzian function on $I$ and $a,b\in I$ with $a<b$. then for
all$\ x\in \lbrack a,b]$ , $\lambda \in \left[ 0,1\right] $ and $\alpha >0$
we have the following inequality for Hadamard fractional integrals%
\begin{eqnarray*}
&&\left\vert I_{f}\left( x,\lambda ,\alpha ,a,b\right) \right\vert \\
&\leq &M\left\{ \left[ \left( 1-\lambda \right) x-\lambda a\right] \left(
\ln \frac{x}{a}\right) ^{\alpha }+\alpha \left( 2\lambda -1\right)
\dint\limits_{a}^{x}\left( \ln \frac{t}{a}\right) ^{\alpha -1}dt\right. \\
&&+\left. \left[ \lambda b-\left( 1-\lambda \right) x\right] \left( \ln 
\frac{b}{x}\right) ^{\alpha }+\alpha \left( 1-2\lambda \right)
\dint\limits_{x}^{b}\left( \ln \frac{b}{t}\right) ^{\alpha -1}dt\right\} .
\end{eqnarray*}
\end{theorem}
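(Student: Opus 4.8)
The plan is to split the functional $I_{f}$ into a ``left'' piece supported on $[a,x]$ and a ``right'' piece supported on $[x,b]$, and to treat each by writing the two boundary contributions as fractional integrals of constants, so that everything collapses into a single integral of differences of $f$. Writing $\Gamma(\alpha+1)=\alpha\Gamma(\alpha)$, the definitions unwind to
\[
\Gamma(\alpha+1)J_{x-}^{\alpha}f(a)=\alpha\int_{a}^{x}\left(\ln\frac{t}{a}\right)^{\alpha-1}f(t)\frac{dt}{t},\qquad \Gamma(\alpha+1)J_{x+}^{\alpha}f(b)=\alpha\int_{x}^{b}\left(\ln\frac{b}{t}\right)^{\alpha-1}f(t)\frac{dt}{t}.
\]
The elementary substitution $u=\ln(t/a)$ gives $\alpha\int_{a}^{x}\left(\ln\frac{t}{a}\right)^{\alpha-1}\frac{dt}{t}=\left(\ln\frac{x}{a}\right)^{\alpha}$, and similarly on $[x,b]$. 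Hence, setting $L:=(1-\lambda)f(x)\ln^{\alpha}\frac{x}{a}+\lambda f(a)\ln^{\alpha}\frac{x}{a}-\Gamma(\alpha+1)J_{x-}^{\alpha}f(a)$, the two boundary terms are absorbed into the fractional integral and
\[
L=\alpha\int_{a}^{x}\left(\ln\frac{t}{a}\right)^{\alpha-1}\big[(1-\lambda)(f(x)-f(t))+\lambda(f(a)-f(t))\big]\frac{dt}{t}.
\]
Since $I_{f}=L+R$, where $R$ is the right-hand analogue $R=\alpha\int_{x}^{b}\left(\ln\frac{b}{t}\right)^{\alpha-1}\big[(1-\lambda)(f(x)-f(t))+\lambda(f(b)-f(t))\big]\frac{dt}{t}$ obtained by absorbing $(1-\lambda)f(x)\ln^{\alpha}\frac{b}{x}$ and $\lambda f(b)\ln^{\alpha}\frac{b}{x}$ into $\Gamma(\alpha+1)J_{x+}^{\alpha}f(b)$, it suffices to bound $|L|$ and $|R|$ separately.

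For $|L|$ I would apply the triangle inequality followed by the $M$-Lipschitz estimate, using $|f(x)-f(t)|\le M(x-t)$ and $|f(a)-f(t)|\le M(t-a)$ for $t\in[a,x]$, to get
\[
|L|\le \alpha M\int_{a}^{x}\left(\ln\frac{t}{a}\right)^{\alpha-1}\big[(1-\lambda)(x-t)+\lambda(t-a)\big]\frac{dt}{t}.
\]
The bracket equals $\left[(1-\lambda)x-\lambda a\right]+(2\lambda-1)t$, so splitting the integral and using $\alpha\int_{a}^{x}\left(\ln\frac{t}{a}\right)^{\alpha-1}\frac{dt}{t}=\left(\ln\frac{x}{a}\right)^{\alpha}$ together with $\int_{a}^{x}\left(\ln\frac{t}{a}\right)^{\alpha-1}t\cdot\frac{dt}{t}=\int_{a}^{x}\left(\ln\frac{t}{a}\right)^{\alpha-1}dt$ reproduces exactly the first two terms of the claimed bound. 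The identical computation on $[x,b]$, where the bracket becomes $\left[\lambda b-(1-\lambda)x\right]+(1-2\lambda)t$, yields the last two terms, and adding the two estimates finishes the proof.

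There is no deep obstacle here: the argument is an exact identity followed by a direct Lipschitz estimate, and the only points demanding care are bookkeeping. I would flag two of them. First, one must correctly unwind the subscripts in $J_{x-}^{\alpha}f(a)$ and $J_{x+}^{\alpha}f(b)$ against the definition so that the kernels emerge as $\left(\ln\frac{t}{a}\right)^{\alpha-1}$ and $\left(\ln\frac{b}{t}\right)^{\alpha-1}$ on the correct intervals. Second, although the coefficient $2\lambda-1$ (respectively $1-2\lambda$) changes sign at $\lambda=\tfrac12$ and can make individual terms of the stated bound negative, the estimate remains valid because each arises from the single manifestly nonnegative integrand $(1-\lambda)(x-t)+\lambda(t-a)\ge 0$ on $[a,x]$; the split into two summands is purely algebraic and does not affect the inequality. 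Note also that differentiability of $f$ is never needed, since the Lipschitz bound is applied directly to the integrand.
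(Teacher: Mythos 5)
Your proposal is correct and follows essentially the same route as the paper: unwind the Hadamard fractional integrals via $\Gamma(\alpha+1)=\alpha\Gamma(\alpha)$ and the identity $\alpha\int_a^x\left(\ln\frac{t}{a}\right)^{\alpha-1}\frac{dt}{t}=\left(\ln\frac{x}{a}\right)^{\alpha}$, apply the triangle inequality and the Lipschitz bound inside the integrals, and evaluate the resulting elementary integrals. The only (immaterial) difference is that you group the terms by interval ($[a,x]$ versus $[x,b]$) whereas the paper groups them by the weights $(1-\lambda)$ and $\lambda$; you are also somewhat more explicit than the paper about the final integral evaluations.
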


\begin{proof}
Using the hypothesis of $f$ , we have the following inequality%
\begin{eqnarray*}
&&\left\vert I_{f}\left( x,\lambda ,\alpha ,a,b\right) \right\vert
=\left\vert \left( 1-\lambda \right) \left[ \ln ^{\alpha }\frac{x}{a}+\ln
^{\alpha }\frac{b}{x}\right] f(x)\right. \\
&&\left. +\lambda \left[ f(a)\ln ^{\alpha }\frac{x}{a}+f(b)\ln ^{\alpha }%
\frac{b}{x}\right] -\alpha \left[ \dint\limits_{a}^{x}\left( \ln \frac{t}{a}%
\right) ^{\alpha -1}f(t)\frac{dt}{t}+\dint\limits_{x}^{b}\left( \ln \frac{b}{%
t}\right) ^{\alpha -1}f(t)\frac{dt}{t}\right] \right\vert
\end{eqnarray*}%
\begin{eqnarray*}
&\leq &\left( 1-\lambda \right) \left\vert f(x)\ln ^{\alpha }\frac{x}{a}%
-\alpha \dint\limits_{a}^{x}\left( \ln \frac{t}{a}\right) ^{\alpha -1}f(t)%
\frac{dt}{t}+f(x)\ln ^{\alpha }\frac{b}{x}-\alpha \dint\limits_{x}^{b}\left(
\ln \frac{b}{t}\right) ^{\alpha -1}f(t)\frac{dt}{t}\right\vert \\
&&+\lambda \left\vert f(a)\ln ^{\alpha }\frac{x}{a}-\alpha
\dint\limits_{a}^{x}\left( \ln \frac{t}{a}\right) ^{\alpha -1}f(t)\frac{dt}{t%
}+f(b)\ln ^{\alpha }\frac{b}{x}-\alpha \dint\limits_{x}^{b}\left( \ln \frac{b%
}{t}\right) ^{\alpha -1}f(t)\frac{dt}{t}\right\vert
\end{eqnarray*}%
\begin{eqnarray*}
&\leq &\alpha \left( 1-\lambda \right) \left[ \dint\limits_{a}^{x}\left( \ln 
\frac{t}{a}\right) ^{\alpha -1}\left\vert f(x)-f(t)\right\vert \frac{dt}{t}%
+\dint\limits_{x}^{b}\left( \ln \frac{b}{t}\right) ^{\alpha -1}\left\vert
f(x)-f(t)\right\vert \frac{dt}{t}\right] \\
&&+\alpha \lambda \left[ \dint\limits_{a}^{x}\left( \ln \frac{t}{a}\right)
^{\alpha -1}\left\vert f(a)-f(t)\right\vert \frac{dt}{t}+\dint%
\limits_{x}^{b}\left( \ln \frac{b}{t}\right) ^{\alpha -1}\left\vert
f(b)-f(t)\right\vert \frac{dt}{t}\right]
\end{eqnarray*}%
\begin{eqnarray*}
&\leq &\alpha \left( 1-\lambda \right) M\left[ \dint\limits_{a}^{x}\left(
\ln \frac{t}{a}\right) ^{\alpha -1}\left( x-t\right) \frac{dt}{t}%
+\dint\limits_{x}^{b}\left( \ln \frac{b}{t}\right) ^{\alpha -1}\left(
t-x\right) \frac{dt}{t}\right] \\
&&+\alpha \lambda M\left[ \dint\limits_{a}^{x}\left( \ln \frac{t}{a}\right)
^{\alpha -1}\left( t-a\right) \frac{dt}{t}+\dint\limits_{x}^{b}\left( \ln 
\frac{b}{t}\right) ^{\alpha -1}\left( b-t\right) \frac{dt}{t}\right]
\end{eqnarray*}%
\begin{eqnarray*}
&\leq &M\left\{ \left[ \left( 1-\lambda \right) x-\lambda a\right] \left(
\ln \frac{x}{a}\right) ^{\alpha }+\alpha \left( 2\lambda -1\right)
\dint\limits_{a}^{x}\left( \ln \frac{t}{a}\right) ^{\alpha -1}dt\right. \\
&&+\left. \left[ \lambda b-\left( 1-\lambda \right) x\right] \left( \ln 
\frac{b}{x}\right) ^{\alpha }+\alpha \left( 1-2\lambda \right)
\dint\limits_{x}^{b}\left( \ln \frac{b}{t}\right) ^{\alpha -1}dt\right\} .
\end{eqnarray*}
\end{proof}

\begin{corollary}
In Theorem \ref{2.1}, If we take $\lambda =0,$ then we get%
\begin{eqnarray}
&&\left\vert \frac{\left( \ln \frac{x}{a}\right) ^{\alpha }+\left( \ln \frac{%
b}{x}\right) ^{\alpha }}{\left( \ln \frac{b}{a}\right) ^{\alpha }}f(x)-\frac{%
\Gamma \left( \alpha +1\right) }{\left( \ln \frac{b}{a}\right) ^{\alpha }}%
\left[ J_{x-}^{\alpha }f(a)+J_{x+}^{\alpha }f(b)\right] \right\vert
\label{3-10} \\
&\leq &\frac{M}{\left( \ln \frac{b}{a}\right) ^{\alpha }}\left\{ x\left[
\left( \ln \frac{x}{a}\right) ^{\alpha }-\left( \ln \frac{b}{x}\right)
^{\alpha }\right] +\alpha \left[ \dint\limits_{x}^{b}\left( \ln \frac{b}{t}%
\right) ^{\alpha -1}dt-\dint\limits_{a}^{x}\left( \ln \frac{t}{a}\right)
^{\alpha -1}dt\right] \right\} .  \notag
\end{eqnarray}%
In this inequality,

\begin{enumerate}
\item[(i)] If we take $\alpha =1$, then 
\begin{eqnarray*}
&&\left\vert f(x)-\frac{1}{\ln b-\ln a}\dint\limits_{a}^{b}\frac{f(t)}{t}%
dt\right\vert \\
&\leq &\frac{M}{\ln b-\ln a}\left\{ x\ln \frac{x^{2}}{ab}+\left(
a+b-2x\right) \right\} .
\end{eqnarray*}

\item[(ii)] If we take $x=\sqrt{ab}$, then 
\begin{eqnarray*}
&&\left\vert f(\sqrt{ab})-\frac{2^{\alpha -1}\Gamma \left( \alpha +1\right) 
}{\left( \ln \frac{b}{a}\right) ^{\alpha }}\left[ J_{\sqrt{ab}-}^{\alpha
}f(a)+J_{\sqrt{ab}+}^{\alpha }f(b)\right] \right\vert \\
&\leq &\frac{2^{\alpha -1}M\alpha }{\left( \ln \frac{b}{a}\right) ^{\alpha }}%
\left\{ \dint\limits_{\sqrt{ab}}^{b}\left( \ln \frac{b}{t}\right) ^{\alpha
-1}dt-\dint\limits_{a}^{\sqrt{ab}}\left( \ln \frac{t}{a}\right) ^{\alpha
-1}dt\right\} .
\end{eqnarray*}

\item[(iii)] If we take $x=\sqrt{ab}$ and~$\alpha =1$ then%
\begin{eqnarray}
&&\left\vert f(\sqrt{ab})-\frac{1}{\ln b-\ln a}\dint\limits_{a}^{b}\frac{f(t)%
}{t}dt\right\vert  \label{3-1} \\
&\leq &\frac{M}{\ln b-\ln a}\left( a+b-2\sqrt{ab}\right) .  \notag
\end{eqnarray}
\end{enumerate}
\end{corollary}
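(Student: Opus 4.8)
The plan is to derive the entire corollary by specializing the inequality of Theorem \ref{2.1}; no fresh estimation is required, only substitution of the parameters and routine algebraic simplification. To obtain the master inequality (\ref{3-10}), I would set $\lambda=0$ in Theorem \ref{2.1}. On the left the term $\lambda\left[f(a)\ln^\alpha\frac{x}{a}+f(b)\ln^\alpha\frac{b}{x}\right]$ disappears, leaving
\[
I_f(x,0,\alpha,a,b)=\left[\ln^\alpha\tfrac{x}{a}+\ln^\alpha\tfrac{b}{x}\right]f(x)-\Gamma(\alpha+1)\left[J_{x-}^\alpha f(a)+J_{x+}^\alpha f(b)\right],
\]
while on the right the bracket coefficients become $\left[(1-\lambda)x-\lambda a\right]\big|_{\lambda=0}=x$, $(2\lambda-1)\big|_{\lambda=0}=-1$, $\left[\lambda b-(1-\lambda)x\right]\big|_{\lambda=0}=-x$, and $(1-2\lambda)\big|_{\lambda=0}=1$. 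Dividing both sides by $\left(\ln\frac{b}{a}\right)^\alpha$ then produces (\ref{3-10}) verbatim.

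For part (i) I would put $\alpha=1$ in (\ref{3-10}). Three simplifications occur simultaneously: the coefficient of $f(x)$ collapses since $\ln\frac{x}{a}+\ln\frac{b}{x}=\ln\frac{b}{a}$, giving coefficient $1$; the fractional integrals reduce, using $\Gamma(2)=1$ together with the definition of $J$, to $\int_a^x\frac{f(t)}{t}\,dt+\int_x^b\frac{f(t)}{t}\,dt=\int_a^b\frac{f(t)}{t}\,dt$; and on the right the two elementary integrals evaluate to $b-x$ and $x-a$ while $\ln\frac{x}{a}-\ln\frac{b}{x}=\ln\frac{x^2}{ab}$. Collecting these yields the stated bound $\frac{M}{\ln b-\ln a}\left\{x\ln\frac{x^2}{ab}+(a+b-2x)\right\}$.

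For part (ii) I would set $x=\sqrt{ab}$ in (\ref{3-10}). The governing computation is $\ln\frac{\sqrt{ab}}{a}=\ln\frac{b}{\sqrt{ab}}=\tfrac12\ln\frac{b}{a}$, whence $\left(\ln\frac{x}{a}\right)^\alpha=\left(\ln\frac{b}{x}\right)^\alpha=2^{-\alpha}\left(\ln\frac{b}{a}\right)^\alpha$. Consequently the coefficient of $f(x)$ equals $2^{1-\alpha}$ and the term $x\left[\left(\ln\frac{x}{a}\right)^\alpha-\left(\ln\frac{b}{x}\right)^\alpha\right]$ on the right vanishes. The one point needing care here---indeed the only step that is a genuine manipulation rather than a plain substitution---is that $f(\sqrt{ab})$ now carries the coefficient $2^{1-\alpha}$ rather than $1$; to recover the stated normalization I would divide the whole inequality through by $2^{1-\alpha}$, i.e. multiply by $2^{\alpha-1}$, and this is precisely the factor that then decorates both the fractional-integral term and the $M\alpha$ on the two sides of (ii).

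Finally, part (iii) follows by imposing both $x=\sqrt{ab}$ and $\alpha=1$. I would read it off from part (i) evaluated at $x=\sqrt{ab}$: the left side becomes $\left|f(\sqrt{ab})-\frac{1}{\ln b-\ln a}\int_a^b\frac{f(t)}{t}\,dt\right|$, and on the right $x\ln\frac{x^2}{ab}=\sqrt{ab}\,\ln 1=0$, so only $a+b-2\sqrt{ab}$ survives, giving (\ref{3-1}); equivalently one may take $\alpha=1$ in part (ii). I do not anticipate any real obstacle, since the corollary is a chain of specializations of Theorem \ref{2.1}, the sole nonroutine move being the renormalization by $2^{1-\alpha}$ in part (ii).
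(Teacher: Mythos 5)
Your proposal is correct and follows exactly the route the paper intends (the corollary is stated without proof, being a direct specialization of Theorem \ref{2.1}): substitute $\lambda=0$, divide by $\left(\ln\frac{b}{a}\right)^{\alpha}$, and then specialize $\alpha$ and $x$, with the only nontrivial step being the multiplication by $2^{\alpha-1}$ in part (ii), which you handle correctly.
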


\begin{corollary}
In Theorem \ref{2.1}, If we take $\lambda =1,$ then we get%
\begin{eqnarray*}
&&\left\vert \left[ \frac{f(a)\ln ^{\alpha }\frac{x}{a}+f(b)\ln ^{\alpha }%
\frac{b}{x}}{\left( \ln \frac{b}{a}\right) ^{\alpha }}\right] -\frac{%
2^{\alpha -1}\Gamma \left( \alpha +1\right) }{\left( \ln \frac{b}{a}\right)
^{\alpha }}\left[ J_{x-}^{\alpha }f(a)+J_{x+}^{\alpha }f(b)\right]
\right\vert \\
&\leq &\frac{M}{\left( \ln \frac{b}{a}\right) ^{\alpha }}\left\{ b\ln
^{\alpha }\frac{x}{a}-a\ln ^{\alpha }\frac{b}{x}-\alpha \left[
\dint\limits_{x}^{b}\left( \ln \frac{b}{t}\right) ^{\alpha
-1}dt-\dint\limits_{a}^{x}\left( \ln \frac{t}{a}\right) ^{\alpha -1}dt\right]
\right\} .
\end{eqnarray*}%
In this inequality, if we take $x=\sqrt{ab}$, then 
\begin{eqnarray}
&&\left\vert \frac{f(a)+f(b)}{2}-\frac{2^{\alpha -1}\Gamma \left( \alpha
+1\right) }{\left( \ln \frac{b}{a}\right) ^{\alpha }}\left[ J_{\sqrt{ab}%
-}^{\alpha }f(a)+J_{\sqrt{ab}+}^{\alpha }f(b)\right] \right\vert
\label{3-20} \\
&\leq &\frac{2^{\alpha -1}M}{\left( \ln \frac{b}{a}\right) ^{\alpha }}%
\left\{ \frac{b-a}{2^{\alpha }}\left( \ln \frac{b}{a}\right) ^{\alpha
}-\alpha \left[ \dint\limits_{\sqrt{ab}}^{b}\left( \ln \frac{b}{t}\right)
^{\alpha -1}dt-\dint\limits_{a}^{\sqrt{ab}}\left( \ln \frac{t}{a}\right)
^{\alpha -1}dt\right] \right\} .  \notag
\end{eqnarray}

Specially if we take $\alpha =1$ in this inequality, then we have%
\begin{eqnarray}
&&\left\vert \frac{f(a)+f(b)}{2}-\frac{1}{\ln b-\ln a}\dint\limits_{a}^{b}%
\frac{f(t)}{t}dt\right\vert  \label{3-2} \\
&\leq &\frac{M}{\ln b-\ln a}\left\{ \frac{b-a}{2}\left( \ln \frac{b}{a}%
\right) -\left( a+b-2\sqrt{ab}\right) \right\} .  \notag
\end{eqnarray}
\end{corollary}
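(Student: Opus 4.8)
The plan is to read off this corollary directly from Theorem \ref{2.1}, since no fresh estimation is required: the desired inequalities are all specializations of the bound on $\left\vert I_{f}\left( x,\lambda ,\alpha ,a,b\right) \right\vert$. First I would put $\lambda =1$ everywhere. In the expression for $I_{f}$ the coefficient $1-\lambda$ multiplying $\left[ \ln ^{\alpha }\frac{x}{a}+\ln ^{\alpha }\frac{b}{x}\right] f(x)$ vanishes, so
\[
I_{f}\left( x,1,\alpha ,a,b\right) =f(a)\ln ^{\alpha }\tfrac{x}{a}+f(b)\ln ^{\alpha }\tfrac{b}{x}-\Gamma (\alpha +1)\left[ J_{x-}^{\alpha }f(a)+J_{x+}^{\alpha }f(b)\right] .
\]
On the right-hand side of Theorem \ref{2.1} the four coefficients become $\left( 1-\lambda \right) x-\lambda a=-a$, $2\lambda -1=1$, $\lambda b-\left( 1-\lambda \right) x=b$ and $1-2\lambda =-1$, which collapses the bound to $M$ times $-a\ln ^{\alpha }\frac{x}{a}+b\ln ^{\alpha }\frac{b}{x}+\alpha \int_{a}^{x}\left( \ln \frac{t}{a}\right) ^{\alpha -1}dt-\alpha \int_{x}^{b}\left( \ln \frac{b}{t}\right) ^{\alpha -1}dt$. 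Dividing both sides by $\left( \ln \frac{b}{a}\right) ^{\alpha }$ then yields the first displayed inequality of the corollary.

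Next I would specialize to $x=\sqrt{ab}$. The key identity here is $\ln \frac{\sqrt{ab}}{a}=\ln \frac{b}{\sqrt{ab}}=\tfrac{1}{2}\ln \frac{b}{a}$, so that $\ln ^{\alpha }\frac{\sqrt{ab}}{a}=\ln ^{\alpha }\frac{b}{\sqrt{ab}}=2^{-\alpha }\left( \ln \frac{b}{a}\right) ^{\alpha }$; the two logarithmic weights become equal. Consequently the numerator $f(a)\ln ^{\alpha }\frac{x}{a}+f(b)\ln ^{\alpha }\frac{b}{x}$ reduces to $2^{-\alpha }\left( \ln \frac{b}{a}\right) ^{\alpha }\bigl( f(a)+f(b)\bigr) $, and the term $-a\ln ^{\alpha }\frac{x}{a}+b\ln ^{\alpha }\frac{b}{x}$ reduces to $2^{-\alpha }\left( b-a\right) \left( \ln \frac{b}{a}\right) ^{\alpha }=\frac{b-a}{2^{\alpha }}\left( \ln \frac{b}{a}\right) ^{\alpha }$. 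After clearing $\left( \ln \frac{b}{a}\right) ^{\alpha }$ and multiplying through by $2^{\alpha -1}$ the constant becomes precisely $\frac{f(a)+f(b)}{2}$; this is exactly the step that produces the factor $2^{\alpha -1}$ in front of $\Gamma (\alpha +1)$ and in front of the whole bound, giving inequality (\ref{3-20}).

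Finally, for $\alpha =1$ I would use $\Gamma (2)=1$ and the fact that Hadamard fractional integrals of order one are ordinary integrals, so that $J_{\sqrt{ab}-}^{1}f(a)+J_{\sqrt{ab}+}^{1}f(b)=\int_{a}^{\sqrt{ab}}f(t)\frac{dt}{t}+\int_{\sqrt{ab}}^{b}f(t)\frac{dt}{t}=\int_{a}^{b}\frac{f(t)}{t}dt$. In the bound the integrands $\left( \ln \frac{b}{t}\right) ^{0}$ and $\left( \ln \frac{t}{a}\right) ^{0}$ are both $1$, so the two integrals evaluate to $\int_{\sqrt{ab}}^{b}dt-\int_{a}^{\sqrt{ab}}dt=\left( b-\sqrt{ab}\right) -\left( \sqrt{ab}-a\right) =a+b-2\sqrt{ab}$, which delivers (\ref{3-2}).

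I do not expect a genuine obstacle: the whole argument is a substitution followed by simplification. The only place that needs care is the bookkeeping of signs coming from the coefficients $2\lambda -1$ and $1-2\lambda$ at $\lambda =1$, and the tracking of the normalizing factors $\left( \ln \frac{b}{a}\right) ^{-\alpha }$ and $2^{\alpha -1}$ that convert the raw output of Theorem \ref{2.1} into the symmetric Hermite--Hadamard-type form recorded in the corollary.
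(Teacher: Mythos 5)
Your proposal is correct and matches the paper's (implicit) derivation: the corollary is just Theorem \ref{2.1} specialized to $\lambda =1$, then $x=\sqrt{ab}$, then $\alpha =1$, and your bookkeeping of the coefficients $-a$, $1$, $b$, $-1$ and of the factors $\left( \ln \frac{b}{a}\right) ^{-\alpha }$ and $2^{\alpha -1}$ is exactly what is needed. One remark: your substitution correctly yields $b\ln ^{\alpha }\frac{b}{x}-a\ln ^{\alpha }\frac{x}{a}$ in the bound, whereas the corollary as printed has the two logarithms swapped (and also carries a spurious $2^{\alpha -1}$ in front of $\Gamma (\alpha +1)$ in the general-$x$ display); these are typos in the paper that vanish at $x=\sqrt{ab}$, so (\ref{3-20}) and (\ref{3-2}) are unaffected.
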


\begin{corollary}
In Theorem \ref{2.1},

\begin{enumerate}
\item If we take $x=\sqrt{ab}$ and $\lambda =1/3,$ then 
\begin{eqnarray*}
&&\left\vert \frac{1}{3}\left[ \frac{f(a)+f(b)}{2}+2f\left( \sqrt{ab}\right) %
\right] -\frac{2^{\alpha -1}\Gamma \left( \alpha +1\right) }{\left( \ln 
\frac{b}{a}\right) ^{\alpha }}\left[ J_{\sqrt{ab}-}^{\alpha }f(a)+J_{\sqrt{ab%
}+}^{\alpha }f(b)\right] \right\vert \\
&\leq &\frac{2^{\alpha -1}M}{3\left( \ln \frac{b}{a}\right) ^{\alpha }}%
\left\{ \frac{b-a}{2^{\alpha }}\left( \ln \frac{b}{a}\right) ^{\alpha
}-\alpha \left[ \dint\limits_{\sqrt{ab}}^{b}\left( \ln \frac{b}{t}\right)
^{\alpha -1}dt-\dint\limits_{a}^{\sqrt{ab}}\left( \ln \frac{t}{a}\right)
^{\alpha -1}dt\right] \right\} .
\end{eqnarray*}%
Specially if we take $\alpha =1$ in this inequality, then we have%
\begin{eqnarray}
&&\left\vert \frac{1}{3}\left[ \frac{f(a)+f(b)}{2}+2f\left( \sqrt{ab}\right) %
\right] -\frac{1}{\ln b-\ln a}\dint\limits_{a}^{b}\frac{f(t)}{t}dt\right\vert
\label{3-3} \\
&\leq &\frac{M}{3\left( \ln b-\ln a\right) }\left\{ \frac{b-a}{2}\left( \ln 
\frac{b}{a}\right) -\left( a+b-2\sqrt{ab}\right) \right\} .  \notag
\end{eqnarray}

\item If we take $x=\sqrt{ab}$ and $\lambda =1/2,$ then 
\begin{eqnarray*}
&&\left\vert \frac{1}{2}\left[ \frac{f(a)+f(b)}{2}+2f\left( \sqrt{ab}\right) %
\right] -\frac{2^{\alpha -1}\Gamma \left( \alpha +1\right) }{\left( \ln 
\frac{b}{a}\right) ^{\alpha }}\left[ J_{\sqrt{ab}-}^{\alpha }f(a)+J_{\sqrt{ab%
}+}^{\alpha }f(b)\right] \right\vert \\
&\leq &\frac{M\left( b-a\right) }{4}.
\end{eqnarray*}%
Specially if we take $\alpha =1$ in this inequality, then we have%
\begin{equation}
\left\vert \frac{1}{2}\left[ \frac{f(a)+f(b)}{2}+2f\left( \sqrt{ab}\right) %
\right] -\frac{1}{\ln b-\ln a}\dint\limits_{a}^{b}\frac{f(t)}{t}%
dt\right\vert \leq \frac{M\left( b-a\right) }{4}.  \label{3-4}
\end{equation}
\end{enumerate}
\end{corollary}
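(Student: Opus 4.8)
Both items are specializations of Theorem \ref{2.1} at the geometric mean $x=\sqrt{ab}$, so the plan is entirely computational: substitute the chosen $x$ and $\lambda$ into the inequality of Theorem \ref{2.1}, simplify using the symmetry of the logarithms about $\sqrt{ab}$, and finally let $\alpha=1$. The one identity that drives everything is
\[
\ln\frac{\sqrt{ab}}{a}=\ln\frac{b}{\sqrt{ab}}=\tfrac12\ln\frac{b}{a},
\]
so that $\left(\ln\frac{\sqrt{ab}}{a}\right)^{\alpha}=\left(\ln\frac{b}{\sqrt{ab}}\right)^{\alpha}=2^{-\alpha}\left(\ln\frac{b}{a}\right)^{\alpha}$. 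Throughout I would carry the common normalizing factor $2^{\alpha-1}/\left(\ln\frac{b}{a}\right)^{\alpha}$ on both sides, since this is exactly the factor that converts $I_f$ into the normalized three-point expression displayed on the left of each inequality.

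First I would treat the left-hand member. Putting $x=\sqrt{ab}$ in the definition of $I_f$ and collapsing the two equal log-weights gives
\[
I_f(\sqrt{ab},\lambda,\alpha,a,b)=\frac{\left(\ln\frac{b}{a}\right)^{\alpha}}{2^{\alpha}}\left[2(1-\lambda)f(\sqrt{ab})+\lambda\left(f(a)+f(b)\right)\right]-\Gamma(\alpha+1)\left[J_{\sqrt{ab}-}^{\alpha}f(a)+J_{\sqrt{ab}+}^{\alpha}f(b)\right].
\]
Multiplying through by $2^{\alpha-1}/\left(\ln\frac{b}{a}\right)^{\alpha}$ turns the bracketed combination into $(1-\lambda)f(\sqrt{ab})+\tfrac{\lambda}{2}\left(f(a)+f(b)\right)$ and the integral term into $\frac{2^{\alpha-1}\Gamma(\alpha+1)}{\left(\ln\frac{b}{a}\right)^{\alpha}}\left[J_{\sqrt{ab}-}^{\alpha}f(a)+J_{\sqrt{ab}+}^{\alpha}f(b)\right]$. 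Setting $\lambda=\tfrac13$ produces the weight $\tfrac13\left[\tfrac{f(a)+f(b)}{2}+2f(\sqrt{ab})\right]$, and setting $\lambda=\tfrac12$ produces $\tfrac14\left(f(a)+f(b)\right)+\tfrac12 f(\sqrt{ab})$, i.e. the three-point combinations appearing on the left of the two inequalities.

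The right-hand side is where the two cases genuinely diverge, and it is the main point of the proof. In the bound of Theorem \ref{2.1} the two surviving integrals carry the coefficients $\alpha(2\lambda-1)$ and $\alpha(1-2\lambda)$. For $\lambda=\tfrac12$ these coefficients vanish, so both integrals disappear; evaluating $(1-\lambda)x-\lambda a$ and $\lambda b-(1-\lambda)x$ at $x=\sqrt{ab},\ \lambda=\tfrac12$ and adding the two log-power terms gives $\tfrac12(b-a)\,2^{-\alpha}\left(\ln\frac{b}{a}\right)^{\alpha}$, which after multiplication by $2^{\alpha-1}/\left(\ln\frac{b}{a}\right)^{\alpha}$ collapses to the clean constant $M(b-a)/4$. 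This is the easy case. For $\lambda=\tfrac13$ the coefficients equal $-\alpha/3$ and $+\alpha/3$, so the integrals survive as the single difference $\int_{\sqrt{ab}}^{b}\left(\ln\frac{b}{t}\right)^{\alpha-1}dt-\int_{a}^{\sqrt{ab}}\left(\ln\frac{t}{a}\right)^{\alpha-1}dt$ weighted by $\alpha/3$, while the two log-power terms again combine to $\tfrac13(b-a)\,2^{-\alpha}\left(\ln\frac{b}{a}\right)^{\alpha}$; correctly tracking the sign with which this leftover difference enters is the one delicate step.

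Finally I would set $\alpha=1$. Then $\left(\ln\frac{t}{a}\right)^{\alpha-1}=\left(\ln\frac{b}{t}\right)^{\alpha-1}=1$, so the surviving integrals become ordinary lengths, $\int_{a}^{\sqrt{ab}}dt=\sqrt{ab}-a$ and $\int_{\sqrt{ab}}^{b}dt=b-\sqrt{ab}$, whose difference is $a+b-2\sqrt{ab}$; at the same time $2^{\alpha-1}\Gamma(\alpha+1)/\left(\ln\frac{b}{a}\right)^{\alpha}$ reduces to $1/(\ln b-\ln a)$ and $J_{\sqrt{ab}-}^{1}f(a)+J_{\sqrt{ab}+}^{1}f(b)=\int_{a}^{b}f(t)\,\frac{dt}{t}$, yielding the logarithmic-mean forms \eqref{3-3} and \eqref{3-4}. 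I expect the only real friction to be the bookkeeping of the powers of $2$ (the interplay of $2^{\alpha}$ and $2^{\alpha-1}$ in the normalization) together with the sign of the integral-difference term for $\lambda=\tfrac13$; no new analytic input is needed, since the whole Lipschitz estimate has already been spent in proving Theorem \ref{2.1}.
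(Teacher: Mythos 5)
Your route --- substitute $x=\sqrt{ab}$ and $\lambda\in\{1/3,1/2\}$ into Theorem \ref{2.1}, use $\ln\frac{\sqrt{ab}}{a}=\ln\frac{b}{\sqrt{ab}}=\frac{1}{2}\ln\frac{b}{a}$, normalize by $2^{\alpha-1}/\left(\ln\frac{b}{a}\right)^{\alpha}$, then set $\alpha=1$ --- is exactly the intended derivation (the paper gives no separate proof of this corollary), and your algebra is correct. The problem is your final ``matching'' step: what you correctly derive does not coincide with the printed statement in two places, and you assert agreement without checking it. For $\lambda=1/3$ the integral coefficients in Theorem \ref{2.1} are $\alpha(2\lambda-1)=-\alpha/3$ and $\alpha(1-2\lambda)=+\alpha/3$, so --- as you yourself compute --- the difference $\int_{\sqrt{ab}}^{b}\left(\ln\frac{b}{t}\right)^{\alpha-1}dt-\int_{a}^{\sqrt{ab}}\left(\ln\frac{t}{a}\right)^{\alpha-1}dt$ enters with a \emph{plus} sign, whereas the printed bound and (\ref{3-3}) carry a \emph{minus}. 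Your plus sign is the correct one: the paper itself notes after its $\alpha=1$ corollary that (\ref{3-3}) is the case $\lambda=1/3$ of (\ref{3-5}), and (\ref{3-5}) has $+\left(1-2\lambda\right)\left(a+b-2\sqrt{ab}\right)$, i.e.\ plus at $\lambda=1/3$. Since $a+b-2\sqrt{ab}\geq 0$, the printed minus version is a strictly stronger claim that does not follow from Theorem \ref{2.1}; you flagged the sign as ``the one delicate step'' but then never reconciled it against the statement, which is precisely where the discrepancy sits.

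Likewise for $\lambda=1/2$: your normalized left-hand side is $\frac{1}{2}f(\sqrt{ab})+\frac{1}{4}\left(f(a)+f(b)\right)=\frac{1}{2}\left[\frac{f(a)+f(b)}{2}+f\left(\sqrt{ab}\right)\right]$, but the printed display (and (\ref{3-4})) has $\frac{1}{2}\left[\frac{f(a)+f(b)}{2}+2f\left(\sqrt{ab}\right)\right]$; these differ in the coefficient of $f(\sqrt{ab})$, and the printed version is in fact false --- for a constant function $f\equiv c$ (which is $0$-Lipschitz with $M=0$) the printed (\ref{3-4}) would read $|c|/2\leq 0$. Your computation is again the right one and exposes a typo (the factor $2$ was apparently copied from the $\lambda=1/3$ case, where it is correct since $(1-\lambda)=2/3=2\cdot\frac{1}{3}$). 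Your right-hand side bookkeeping is fine throughout: at $\lambda=1/2$ the integral terms vanish and the log-power terms give $M(b-a)/4$ exactly as printed, and the $\alpha=1$ reductions via $\int_{a}^{\sqrt{ab}}dt=\sqrt{ab}-a$, $\int_{\sqrt{ab}}^{b}dt=b-\sqrt{ab}$ are correct. So: same approach as the paper, correct calculations, but a genuine gap in the verification step --- a careful writeup should state that the substitution proves the corollary \emph{with} the sign in item 1 corrected to $+\alpha[\cdots]$ and with $f(\sqrt{ab})$ in place of $2f(\sqrt{ab})$ in item 2, rather than asserting the printed displays as they stand.
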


\begin{corollary}
\bigskip In Theorem \ref{2.1}, If we take $\alpha =1$, then%
\begin{eqnarray*}
&&\left\vert \left( 1-\lambda \right) f(x)+\lambda \left[ \frac{f(a)\ln 
\frac{x}{a}+f(b)\ln \frac{b}{x}}{\ln b-\ln a}\right] -\frac{1}{\ln b-\ln a}%
\dint\limits_{a}^{b}\frac{f(t)}{t}dt\right\vert \\
&\leq &\frac{M}{\ln b-\ln a}\left\{ \left[ \left( 1-\lambda \right)
x-\lambda a\right] \left( \ln \frac{x}{a}\right) +\left[ \lambda b-\left(
1-\lambda \right) x\right] \left( \ln \frac{b}{x}\right) \right. \\
&&+\left. \left( 1-2\lambda \right) \left( a+b-2x\right) \right\} .
\end{eqnarray*}%
Specially if we take $x=\sqrt{ab}$ in this inequality, then we have%
\begin{eqnarray}
&&\left\vert \left( 1-\lambda \right) f\left( \sqrt{ab}\right) +\lambda
\left( \frac{f(a)+f(b)}{2}\right) -\frac{1}{\ln b-\ln a}\dint\limits_{a}^{b}%
\frac{f(t)}{t}dt\right\vert  \label{3-5} \\
&\leq &\frac{M}{\ln b-\ln a}\left[ \frac{\lambda \left( b-a\right) }{2}\ln 
\frac{b}{a}+\left( 1-2\lambda \right) \left( a+b-2\sqrt{ab}\right) \right] .
\notag
\end{eqnarray}%
We note that if we take $\lambda =0$, $\lambda =1$, $\lambda =1/3$ and $%
\lambda =1/2$ in the inequality (\ref{3-5}) we obtain the inequalities (\ref%
{3-1}), (\ref{3-2}), (\ref{3-3}) and (\ref{3-4}) respectively.
\end{corollary}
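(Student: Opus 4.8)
The plan is to obtain both displayed inequalities purely by specializing the general bound of Theorem~\ref{2.1}, so that no new estimation is needed; the whole content is the bookkeeping of setting $\alpha=1$ and then $x=\sqrt{ab}$. First I would record the three simplifications that occur at $\alpha=1$: the Euler factor collapses to $\Gamma(2)=1$; each power $\left(\ln\frac{x}{a}\right)^{\alpha}$, $\left(\ln\frac{b}{x}\right)^{\alpha}$ becomes the bare logarithm; and each weight $\left(\ln\frac{t}{a}\right)^{\alpha-1}$, $\left(\ln\frac{b}{t}\right)^{\alpha-1}$ becomes $1$. Consequently the two Hadamard integrals reduce to ordinary ones,
\[
J_{x-}^{1}f(a)+J_{x+}^{1}f(b)=\int_{a}^{x}f(t)\frac{dt}{t}+\int_{x}^{b}f(t)\frac{dt}{t}=\int_{a}^{b}\frac{f(t)}{t}\,dt,
\]
while the coefficient of $f(x)$ simplifies through $\ln\frac{x}{a}+\ln\frac{b}{x}=\ln\frac{b}{a}$. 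Substituting these into $I_{f}(x,\lambda,1,a,b)$ and dividing the inequality of Theorem~\ref{2.1} throughout by the positive number $\ln\frac{b}{a}=\ln b-\ln a$ then produces exactly the left-hand side claimed in the first display.

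For the right-hand side I would evaluate the two elementary integrals $\int_{a}^{x}\left(\ln\frac{t}{a}\right)^{0}dt=x-a$ and $\int_{x}^{b}\left(\ln\frac{b}{t}\right)^{0}dt=b-x$, so that the two middle terms of the Theorem collapse to $(2\lambda-1)(x-a)+(1-2\lambda)(b-x)$. The one spot that needs a moment's care is recognizing that this equals $(1-2\lambda)\bigl[(b-x)-(x-a)\bigr]=(1-2\lambda)(a+b-2x)$; after that the bound matches the first display verbatim, carrying the factor $\frac{M}{\ln b-\ln a}$ from the division.

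For inequality~(\ref{3-5}) I would specialize $x=\sqrt{ab}$ and use the single identity $\ln\frac{\sqrt{ab}}{a}=\ln\frac{b}{\sqrt{ab}}=\tfrac12\ln\frac{b}{a}$. On the left this turns the bracketed $\lambda$-term into $\lambda\cdot\frac{f(a)+f(b)}{2}$, because the common factor $\tfrac12\ln\frac{b}{a}$ cancels against $\ln b-\ln a$. On the right I would factor $\tfrac12\ln\frac{b}{a}$ out of the two logarithmic terms; the coefficients $(1-\lambda)\sqrt{ab}-\lambda a$ and $\lambda b-(1-\lambda)\sqrt{ab}$ add to $\lambda(b-a)$, since the $(1-\lambda)\sqrt{ab}$ contributions cancel, giving the stated $\frac{\lambda(b-a)}{2}\ln\frac{b}{a}$, while $(1-2\lambda)(a+b-2x)$ becomes $(1-2\lambda)(a+b-2\sqrt{ab})$.

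Finally, the closing remark is a direct check: putting $\lambda=0,1,\tfrac13,\tfrac12$ in~(\ref{3-5}) and simplifying recovers~(\ref{3-1}),~(\ref{3-2}),~(\ref{3-3}),~(\ref{3-4}) respectively. There is no genuine analytic obstacle here—the estimation was already done in Theorem~\ref{2.1}—so the only real risk is sign discipline in the two collapsing steps, namely the $(1-2\lambda)(a+b-2x)$ rewriting and the cancellation of the $\sqrt{ab}$ terms. I would therefore carry the $(2\lambda-1)$ versus $(1-2\lambda)$ signs explicitly rather than by eye.
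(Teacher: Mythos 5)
Your derivation of the two displayed inequalities is correct and is exactly the route the paper intends: the corollary is stated without a separate proof as a direct specialization of Theorem \ref{2.1}, and your bookkeeping (the factor $\Gamma(2)=1$, the weights $\left(\ln\frac{t}{a}\right)^{0}=\left(\ln\frac{b}{t}\right)^{0}=1$ giving $x-a$ and $b-x$, the collapse $(2\lambda-1)(x-a)+(1-2\lambda)(b-x)=(1-2\lambda)(a+b-2x)$, and the cancellation of the $(1-\lambda)\sqrt{ab}$ contributions at $x=\sqrt{ab}$) reproduces both displays faithfully.

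However, the closing ``direct check'' that you assert but do not carry out does not in fact go through verbatim for $\lambda=1/3$ and $\lambda=1/2$. Substituting $\lambda=1/3$ into (\ref{3-5}) yields the bound $\frac{M}{3(\ln b-\ln a)}\left[\frac{b-a}{2}\ln\frac{b}{a}+\left(a+b-2\sqrt{ab}\right)\right]$, whereas (\ref{3-3}) as printed has a minus sign in front of $\left(a+b-2\sqrt{ab}\right)$; and substituting $\lambda=1/2$ gives the left-hand side $\left\vert\frac{1}{2}\left[\frac{f(a)+f(b)}{2}+f\left(\sqrt{ab}\right)\right]-\frac{1}{\ln b-\ln a}\int_{a}^{b}\frac{f(t)}{t}dt\right\vert$, whereas (\ref{3-4}) carries $2f\left(\sqrt{ab}\right)$ inside the bracket. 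Tracing everything back through Theorem \ref{2.1} (where the relevant coefficient is $1-2\lambda=1/3>0$ for $\lambda=1/3$) confirms that your computation, and hence (\ref{3-5}), is the correct consequence of the theorem; the discrepancies are sign and coefficient errors in the earlier displays (\ref{3-3}) and (\ref{3-4}), inherited from the preceding corollary, not in your algebra. This is precisely the ``sign discipline'' risk you flagged, so the final sentence of the statement should not be certified as a routine check: it holds for $\lambda=0$ and $\lambda=1$, but for $\lambda=1/3$ and $\lambda=1/2$ it only recovers corrected versions of (\ref{3-3}) and (\ref{3-4}).
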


Let $f:I\subseteq \left( 0,\infty \right) \rightarrow 
\mathbb{R}
$ be an $M$-Lipschitzian function. In the next theorem, let $\lambda \in %
\left[ 0,1\right] $, $C=a^{1-\lambda }b^{\lambda }$, $x,y\in \lbrack a,b]$
and define $C_{\alpha ,\lambda }$, $\alpha >0$, as follows:

(1) If $a\leq C\leq x\leq y\leq b$, then%
\begin{eqnarray*}
C_{\alpha ,\lambda }(x,y) &=&\frac{x}{\alpha }\lambda ^{\alpha }\left( \ln 
\frac{b}{a}\right) ^{\alpha }+\frac{y}{\alpha }\left\{ \left( 1-\lambda
\right) ^{\alpha }\left( \ln \frac{b}{a}\right) ^{\alpha }-2\left( \ln \frac{%
b}{y}\right) ^{\alpha }\right\} \\
&&+\dint\limits_{y}^{b}\left( \ln \frac{b}{t}\right) ^{\alpha
-1}dt-\dint\limits_{C}^{y}\left( \ln \frac{b}{t}\right) ^{\alpha
-1}dt-\dint\limits_{a}^{C}\left( \ln \frac{t}{a}\right) ^{\alpha -1}dt
\end{eqnarray*}

(2) If $a\leq x\leq C\leq y\leq b$, then%
\begin{eqnarray*}
C_{\alpha ,\lambda }(x,y) &=&\frac{x}{\alpha }\left\{ 2\left( \ln \frac{x}{a}%
\right) ^{\alpha }-\lambda ^{\alpha }\left( \ln \frac{b}{a}\right) ^{\alpha
}\right\} +\frac{y}{\alpha }\left\{ \left( 1-\lambda \right) ^{\alpha
}\left( \ln \frac{b}{a}\right) ^{\alpha }-2\left( \ln \frac{b}{y}\right)
^{\alpha }\right\} \\
&&+\dint\limits_{x}^{C}\left( \ln \frac{t}{a}\right) ^{\alpha
-1}dt-\dint\limits_{a}^{x}\left( \ln \frac{t}{a}\right) ^{\alpha
-1}dt+\dint\limits_{y}^{b}\left( \ln \frac{b}{t}\right) ^{\alpha
-1}dt-\dint\limits_{C}^{y}\left( \ln \frac{b}{t}\right) ^{\alpha -1}dt.
\end{eqnarray*}

(3) If $a\leq x\leq y\leq C\leq b$, then%
\begin{eqnarray*}
C_{\alpha ,\lambda }(x,y) &=&\frac{x}{\alpha }\left\{ 2\left( \ln \frac{x}{a}%
\right) ^{\alpha }-\lambda ^{\alpha }\left( \ln \frac{b}{a}\right) ^{\alpha
}\right\} -\frac{y}{\alpha }\left( 1-\lambda \right) ^{\alpha }\left( \ln 
\frac{b}{a}\right) ^{\alpha } \\
&&+\dint\limits_{C}^{b}\left( \ln \frac{b}{t}\right) ^{\alpha
-1}dt+\dint\limits_{x}^{C}\left( \ln \frac{t}{a}\right) ^{\alpha
-1}dt-\dint\limits_{a}^{x}\left( \ln \frac{t}{a}\right) ^{\alpha -1}dt.
\end{eqnarray*}%
Now we shall give another result for Lipschitzian functions as follows.

\begin{theorem}
\label{2.2}Let $x,y,\alpha ,\lambda ,C,C_{\alpha ,\lambda }$ and the
function $f$ be defined as above. Then we have the following inequality for
hadamard fractional integrals%
\begin{equation}
\left\vert S_{f}\left( x,y,\lambda ,\alpha ,a,b\right) \right\vert \leq 
\frac{\alpha MC_{\alpha ,\lambda }(x,y)}{\left( \ln \frac{b}{a}\right)
^{\alpha }}  \label{2-2}
\end{equation}
\end{theorem}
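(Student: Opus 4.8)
The plan is to first rewrite the fractional-integral terms in $S_f$ so that $S_f$ becomes a single difference of integrands, then bound it termwise using the Lipschitz hypothesis, and finally resolve the absolute values through the prescribed case analysis. The crucial preliminary observation is that, with $C=a^{1-\lambda}b^{\lambda}$, one has $\ln\frac{C}{a}=\lambda\ln\frac{b}{a}$ and $\ln\frac{b}{C}=(1-\lambda)\ln\frac{b}{a}$, so that $\lambda^{\alpha}=(\ln\frac{C}{a})^{\alpha}/(\ln\frac{b}{a})^{\alpha}$ and $(1-\lambda)^{\alpha}=(\ln\frac{b}{C})^{\alpha}/(\ln\frac{b}{a})^{\alpha}$. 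Using the antiderivatives $\int_a^C(\ln\frac{t}{a})^{\alpha-1}\frac{dt}{t}=\frac{1}{\alpha}(\ln\frac{C}{a})^{\alpha}$ and $\int_C^b(\ln\frac{b}{t})^{\alpha-1}\frac{dt}{t}=\frac{1}{\alpha}(\ln\frac{b}{C})^{\alpha}$, I would rewrite the leading terms as $\lambda^{\alpha}f(x)=\frac{\alpha f(x)}{(\ln\frac{b}{a})^{\alpha}}\int_a^C(\ln\frac{t}{a})^{\alpha-1}\frac{dt}{t}$ and likewise for $(1-\lambda)^{\alpha}f(y)$.

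Substituting the definition of $J_{C-}^{\alpha}f(a)+J_{C+}^{\alpha}f(b)$ and using $\Gamma(\alpha+1)=\alpha\Gamma(\alpha)$, the whole expression collapses to
\[
S_f=\frac{\alpha}{(\ln\frac{b}{a})^{\alpha}}\left[\int_a^C\left(\ln\frac{t}{a}\right)^{\alpha-1}\bigl(f(x)-f(t)\bigr)\frac{dt}{t}+\int_C^b\left(\ln\frac{b}{t}\right)^{\alpha-1}\bigl(f(y)-f(t)\bigr)\frac{dt}{t}\right].
\]
Applying the triangle inequality and the $M$-Lipschitz bound $|f(u)-f(t)|\le M|u-t|$ then yields
\[
|S_f|\le\frac{\alpha M}{(\ln\frac{b}{a})^{\alpha}}\left[\int_a^C\left(\ln\frac{t}{a}\right)^{\alpha-1}|x-t|\frac{dt}{t}+\int_C^b\left(\ln\frac{b}{t}\right)^{\alpha-1}|y-t|\frac{dt}{t}\right].
\]

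It remains only to evaluate the bracketed quantity and identify it with $C_{\alpha,\lambda}(x,y)$; this is where the three cases enter, since the signs of $x-t$ and $y-t$ depend on the positions of $x$ and $y$ relative to the integration ranges $[a,C]$ and $[C,b]$. In each case I would split the two integrals at the point where the relevant difference changes sign (at $x$ in the first integral when $x\in[a,C]$, and at $y$ in the second when $y\in[C,b]$), then separate each resulting piece into a term with the factor $t$ removed, handled by $\int(\ln\frac{t}{a})^{\alpha-1}\frac{dt}{t}$ and $\int(\ln\frac{b}{t})^{\alpha-1}\frac{dt}{t}$, and a term keeping the $dt$ measure. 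The closed-form antiderivatives above dispose of the $\frac{dt}{t}$ pieces and produce exactly the $\frac{x}{\alpha}(\cdots)$ and $\frac{y}{\alpha}(\cdots)$ contributions in the definition of $C_{\alpha,\lambda}$, while the $dt$ pieces reassemble into the stated combinations of $\int(\ln\frac{t}{a})^{\alpha-1}dt$ and $\int(\ln\frac{b}{t})^{\alpha-1}dt$. For instance, in case $(1)$, where $a\le C\le x\le y\le b$, the first integrand has $x-t\ge0$ throughout, while the second splits at $y$, giving precisely the three integrals whose sum is $C_{\alpha,\lambda}(x,y)$.

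I expect the main obstacle to be the bookkeeping in this last step rather than any conceptual difficulty: each of the three regimes requires correctly locating the split points, tracking the signs, and checking that the $\frac{dt}{t}$-antiderivatives recombine with the correct coefficients of $x$ and $y$. The reduction to the clean integral representation in the second paragraph is the conceptual heart of the argument, and once it is in place the theorem follows by elementary, if somewhat tedious, integration in each case.
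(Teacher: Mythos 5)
Your proposal is correct and follows essentially the same route as the paper: reduce $S_f$ to the integral representation $\frac{\alpha}{(\ln\frac{b}{a})^{\alpha}}\bigl[\int_a^C(\ln\frac{t}{a})^{\alpha-1}(f(x)-f(t))\frac{dt}{t}+\int_C^b(\ln\frac{b}{t})^{\alpha-1}(f(y)-f(t))\frac{dt}{t}\bigr]$, apply the Lipschitz bound, and evaluate the two resulting integrals in the three positional cases via the split $\frac{|u-t|}{t}=\pm(\frac{u}{t}-1)$. This is exactly the paper's argument, including the case-by-case identities that assemble into $C_{\alpha,\lambda}(x,y)$.
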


\begin{proof}
Using the hypothesis of $f$, we have the following inequality%
\begin{eqnarray}
&&\left\vert S_{f}\left( x,y,\lambda ,\alpha ,a,b\right) \right\vert  \notag
\\
&=&\frac{\alpha }{\left( \ln \frac{b}{a}\right) ^{\alpha }}\left\vert
\dint\limits_{a}^{C}\frac{\left[ f(x)-f(t)\right] }{t}\left( \ln \frac{t}{a}%
\right) ^{\alpha -1}dt+\dint\limits_{C}^{b}\frac{\left[ f(y)-f(t)\right] }{t}%
\left( \ln \frac{b}{t}\right) ^{\alpha -1}dt\right\vert  \notag
\end{eqnarray}%
\begin{eqnarray}
&\leq &\frac{\alpha }{\left( \ln \frac{b}{a}\right) ^{\alpha }}\left[
\dint\limits_{a}^{C}\frac{\left\vert f(x)-f(t)\right\vert }{t}\left( \ln 
\frac{t}{a}\right) ^{\alpha -1}dt+\dint\limits_{C}^{b}\frac{f\left\vert
(y)-f(t)\right\vert }{t}\left( \ln \frac{b}{t}\right) ^{\alpha -1}dt\right] 
\notag \\
&\leq &\frac{\alpha M}{\left( \ln \frac{b}{a}\right) ^{\alpha }}\left[
\dint\limits_{a}^{C}\frac{\left\vert x-t\right\vert }{t}\left( \ln \frac{t}{a%
}\right) ^{\alpha -1}dt+\dint\limits_{C}^{b}\frac{\left\vert y-t\right\vert 
}{t}\left( \ln \frac{b}{t}\right) ^{\alpha -1}dt\right]  \label{2-2a}
\end{eqnarray}%
Now using simple calculations, we obtain the following identities $%
\int_{a}^{C}\frac{\left\vert x-t\right\vert }{t}\left( \ln \frac{t}{a}%
\right) ^{\alpha -1}dt$ and $\int_{V}^{b}\frac{\left\vert y-t\right\vert }{t}%
\left( \ln \frac{b}{t}\right) ^{\alpha -1}dt.$

(1) If $a\leq C\leq x\leq y\leq b$, then%
\begin{equation*}
\dint\limits_{a}^{C}\frac{\left\vert x-t\right\vert }{t}\left( \ln \frac{t}{a%
}\right) ^{\alpha -1}dt=\frac{x}{\alpha }\lambda ^{\alpha }\left( \ln \frac{b%
}{a}\right) ^{\alpha }-\dint\limits_{a}^{C}\left( \ln \frac{t}{a}\right)
^{\alpha -1}dt
\end{equation*}%
and%
\begin{eqnarray*}
&&\dint\limits_{C}^{b}\frac{\left\vert y-t\right\vert }{t}\left( \ln \frac{b%
}{t}\right) ^{\alpha -1}dt \\
&=&\frac{y}{\alpha }\left\{ \left( 1-\lambda \right) ^{\alpha }\left( \ln 
\frac{b}{a}\right) ^{\alpha }-2\left( \ln \frac{b}{y}\right) ^{\alpha
}\right\} +\dint\limits_{y}^{b}\left( \ln \frac{b}{t}\right) ^{\alpha
-1}dt-\dint\limits_{C}^{y}\left( \ln \frac{b}{t}\right) ^{\alpha -1}dt.
\end{eqnarray*}%
(2) If $a\leq x\leq C\leq y\leq b$, then%
\begin{eqnarray*}
&&\dint\limits_{a}^{C}\frac{\left\vert x-t\right\vert }{t}\left( \ln \frac{t%
}{a}\right) ^{\alpha -1}dt \\
&=&\frac{x}{\alpha }\left\{ 2\left( \ln \frac{x}{a}\right) ^{\alpha
}-\lambda ^{\alpha }\left( \ln \frac{b}{a}\right) ^{\alpha }\right\}
+\dint\limits_{x}^{C}\left( \ln \frac{t}{a}\right) ^{\alpha
-1}dt-\dint\limits_{a}^{x}\left( \ln \frac{t}{a}\right) ^{\alpha -1}dt
\end{eqnarray*}%
and%
\begin{eqnarray*}
&&\dint\limits_{C}^{b}\frac{\left\vert y-t\right\vert }{t}\left( \ln \frac{b%
}{t}\right) ^{\alpha -1}dt \\
&=&\frac{y}{\alpha }\left\{ \left( 1-\lambda \right) ^{\alpha }\left( \ln 
\frac{b}{a}\right) ^{\alpha }-2\left( \ln \frac{b}{y}\right) ^{\alpha
}\right\} +\dint\limits_{y}^{b}\left( \ln \frac{b}{t}\right) ^{\alpha
-1}dt-\dint\limits_{C}^{y}\left( \ln \frac{b}{t}\right) ^{\alpha -1}dt.
\end{eqnarray*}%
(3) If $a\leq x\leq y\leq C\leq b$, then%
\begin{eqnarray*}
&&\dint\limits_{a}^{C}\frac{\left\vert x-t\right\vert }{t}\left( \ln \frac{t%
}{a}\right) ^{\alpha -1}dt \\
&=&\frac{x}{\alpha }\left\{ 2\left( \ln \frac{x}{a}\right) ^{\alpha
}-\lambda ^{\alpha }\left( \ln \frac{b}{a}\right) ^{\alpha }\right\}
+\dint\limits_{x}^{C}\left( \ln \frac{t}{a}\right) ^{\alpha
-1}dt-\dint\limits_{a}^{x}\left( \ln \frac{t}{a}\right) ^{\alpha -1}dt
\end{eqnarray*}%
and%
\begin{equation*}
\dint\limits_{C}^{b}\frac{\left\vert y-t\right\vert }{t}\left( \ln \frac{b}{t%
}\right) ^{\alpha -1}dt=\dint\limits_{C}^{b}\left( \ln \frac{b}{t}\right)
^{\alpha -1}dt-\frac{y}{\alpha }\left( 1-\lambda \right) ^{\alpha }\left(
\ln \frac{b}{a}\right) ^{\alpha }.
\end{equation*}%
Using the inequality (\ref{2-2a}) and the above identities $\int_{a}^{C}%
\frac{\left\vert x-t\right\vert }{t}\left( \ln \frac{t}{a}\right) ^{\alpha
-1}dt$ and $\int_{V}^{b}\frac{\left\vert y-t\right\vert }{t}\left( \ln \frac{%
b}{t}\right) ^{\alpha -1}dt$, we derive the inequality (\ref{2-2}). This
completes the proof.
\end{proof}

Under the assumptions of Theorem \ref{2.2}, we have the following
corollaries and remarks:

\begin{corollary}
In Theorem \ref{2.2}, if we take $\alpha =1$, then the inequality (\ref{2-2}%
) reduces the following inequality:%
\begin{equation*}
\left\vert \left( 1-\lambda \right) f(x)+\lambda f(y)-\frac{1}{\ln b-\ln a}%
\dint\limits_{a}^{b}\frac{f(t)}{t}dt\right\vert \leq \frac{MC_{1,\lambda
}(x,y)}{\ln \frac{b}{a}}
\end{equation*}
\end{corollary}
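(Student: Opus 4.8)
The plan is simply to specialize Theorem~\ref{2.2} to the value $\alpha = 1$ and read off what each of the two sides becomes. Since the inequality (\ref{2-2}) has already been established in full generality, no new estimation is needed; the entire task is to push the substitution $\alpha = 1$ through the definition of $S_f$ and through the right-hand bound.

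First I would dispose of the right-hand side, which is immediate: inserting $\alpha = 1$ into $\frac{\alpha M C_{\alpha,\lambda}(x,y)}{\left(\ln\frac{b}{a}\right)^{\alpha}}$ yields $\frac{M C_{1,\lambda}(x,y)}{\ln\frac{b}{a}}$ by definition, so there is nothing to compute there beyond recognizing that $C_{1,\lambda}(x,y)$ is exactly $C_{\alpha,\lambda}(x,y)$ evaluated at $\alpha=1$.

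The substantive (though still routine) part is the left-hand side $\left|S_f\left(x,y,\lambda,1,a,b\right)\right|$. I would use $\lambda^{1} = \lambda$, $(1-\lambda)^{1} = 1-\lambda$, and $\Gamma(2) = 1$, so that the weight part collapses to a plain convex combination of $f(x)$ and $f(y)$. The key observation is that at $\alpha = 1$ the kernels $\left(\ln\frac{t}{a}\right)^{\alpha-1}$ and $\left(\ln\frac{b}{t}\right)^{\alpha-1}$ both reduce to the constant $1$; hence
\[
J_{C-}^{1}f(a) = \int_{a}^{C} f(t)\frac{dt}{t}, \qquad J_{C+}^{1}f(b) = \int_{C}^{b} f(t)\frac{dt}{t}.
\]
Adding these and recalling that $C = a^{1-\lambda}b^{\lambda}\in[a,b]$, the two pieces join across the splitting point $C$ to give the single integral $\int_{a}^{b}\frac{f(t)}{t}\,dt$, and this reassembly is independent of the particular value of $C$. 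Combining this with $\frac{\Gamma(2)}{\ln\frac{b}{a}} = \frac{1}{\ln b - \ln a}$ turns $S_f$ into the asserted difference of the convex combination and the averaged integral.

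The only point to watch --- and the nearest thing to an obstacle --- is the bookkeeping of which weight multiplies $f(x)$ versus $f(y)$ once the powers $\lambda^{\alpha}$ and $(1-\lambda)^{\alpha}$ are evaluated at $\alpha = 1$, so that the stated coefficients are matched correctly. Once that is settled the identity is purely mechanical, and the claimed inequality follows directly by specializing (\ref{2-2}).
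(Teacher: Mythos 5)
Your proposal is correct and is exactly the argument the paper implicitly relies on: the corollary is stated without proof as a direct substitution of $\alpha=1$ into (\ref{2-2}), using $\Gamma(2)=1$ and the fact that the two Hadamard integrals with constant kernel rejoin across $C$ into $\int_a^b \frac{f(t)}{t}\,dt$. One remark on the ``point to watch'' you flag: carrying the substitution through the definition of $S_f$ honestly gives the combination $\lambda f(x)+(1-\lambda)f(y)$, not the $(1-\lambda)f(x)+\lambda f(y)$ printed in the corollary; this is a typographical slip in the paper (the weights are swapped), not a defect in your argument, and your derivation yields the correct form.
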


\begin{corollary}
In Theorem \ref{2.2}, let $\delta \in \left[ \frac{1}{2},1\right] $, $%
x=a^{\delta }b^{1-\delta }$ and $y=a^{1-\delta }b^{\delta }$. Then, we have
the inequality%
\begin{equation*}
\left\vert \lambda ^{\alpha }f(a^{\delta }b^{1-\delta })+(1-\lambda
)^{\alpha }f(a^{1-\delta }b^{\delta })-\frac{\Gamma (\alpha +1)}{\left( \ln 
\frac{b}{a}\right) ^{\alpha }}\left[ J_{C-}^{\alpha }f(a)+J_{C+}^{\alpha
}f(b)\right] \right\vert
\end{equation*}%
\begin{equation}
\leq ML(\alpha ,\lambda ,\delta )  \label{2-1b}
\end{equation}%
where
\end{corollary}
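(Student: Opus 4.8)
The plan is to apply Theorem \ref{2.2} directly with the specified choices $x=a^{\delta}b^{1-\delta}$ and $y=a^{1-\delta}b^{\delta}$, so that the only substantive work is to evaluate $\dfrac{\alpha C_{\alpha,\lambda}(x,y)}{\left(\ln\frac{b}{a}\right)^{\alpha}}$ explicitly and declare that quantity to be $L(\alpha,\lambda,\delta)$. The first step is to record the elementary identities produced by writing every relevant point as a power of $a$ and $b$: setting $\ell=\ln\frac{b}{a}$ one gets
\begin{equation*}
\ln\tfrac{x}{a}=(1-\delta)\ell,\quad \ln\tfrac{b}{x}=\delta\ell,\quad \ln\tfrac{y}{a}=\delta\ell,\quad \ln\tfrac{b}{y}=(1-\delta)\ell,\quad \ln\tfrac{C}{a}=\lambda\ell,\quad \ln\tfrac{b}{C}=(1-\lambda)\ell .
\end{equation*}
These turn every power-of-logarithm term appearing in $C_{\alpha,\lambda}$ into a constant multiple of $\ell^{\alpha}$, which is the mechanism by which the factor $\left(\ln\frac{b}{a}\right)^{\alpha}$ in the denominator partially cancels.

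The main structural step, and the one demanding the most care, is deciding which of the three branches of $C_{\alpha,\lambda}(x,y)$ is in force. Parametrizing the interval geometrically by $z\mapsto \ln(z/a)/\ell$, the points $x$, $y$, $C$ receive parameters $1-\delta$, $\delta$, $\lambda$ respectively; since $\delta\in[\tfrac12,1]$ we automatically have $a\le x\le\sqrt{ab}\le y\le b$. Hence the position of $C=a^{1-\lambda}b^{\lambda}$ is governed entirely by $\lambda$: branch (1), $a\le C\le x\le y\le b$, holds precisely when $\lambda\le 1-\delta$; branch (2), $a\le x\le C\le y\le b$, when $1-\delta\le\lambda\le\delta$; and branch (3), $a\le x\le y\le C\le b$, when $\lambda\ge\delta$. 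I would state these three ranges of $\lambda$ explicitly and treat them one at a time.

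In each range I would substitute the identities above into the corresponding formula for $C_{\alpha,\lambda}(x,y)$, collect the closed-form terms (which become expressions in $a^{\delta}b^{1-\delta}$, $a^{1-\delta}b^{\delta}$, $\ell^{\alpha}$ and the pure powers $\lambda^{\alpha}$, $(1-\lambda)^{\alpha}$, $(1-\delta)^{\alpha}$, $\delta^{\alpha}$), and leave the residual integrals $\int(\ln\frac{t}{a})^{\alpha-1}dt$ and $\int(\ln\frac{b}{t})^{\alpha-1}dt$ intact, since for general $\alpha$ these are incomplete-gamma-type integrals with no elementary antiderivative. Multiplying by $\alpha/\ell^{\alpha}$ and invoking the bound of Theorem \ref{2.2} then yields $ML(\alpha,\lambda,\delta)$ with $L$ the resulting three-part piecewise expression; the bound of Theorem \ref{2.2} already guarantees that each branch of $C_{\alpha,\lambda}$ is nonnegative (it arose as a sum of integrals of $|x-t|$ and $|y-t|$ against nonnegative kernels), so no absolute values are needed on the right-hand side.

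The principal obstacle is purely bookkeeping: keeping the six logarithmic substitutions and the overlapping integral limits $a,x,C,y,b$ correctly aligned in each branch, and checking the boundary cases $\lambda=1-\delta$ and $\lambda=\delta$, where $C$ coincides with $x$ or $y$, so that the piecewise definition of $L(\alpha,\lambda,\delta)$ is continuous and the branches agree. There is no analytic difficulty beyond Theorem \ref{2.2} itself; the entire content of the corollary is this explicit evaluation.
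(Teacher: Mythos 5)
Your proposal is correct and follows exactly the route the paper intends (the paper states this corollary without a written proof, leaving the substitution implicit): plug $x=a^{\delta}b^{1-\delta}$, $y=a^{1-\delta}b^{\delta}$ into Theorem \ref{2.2}, note that $\delta\geq\tfrac12$ forces $a\leq x\leq y\leq b$ so the branch of $C_{\alpha,\lambda}$ is determined by comparing $\lambda$ with $1-\delta$ and $\delta$, and the three resulting ranges reproduce cases (i)--(iii) of $L(\alpha,\lambda,\delta)$. The logarithmic identities and the resulting cancellation of $\left(\ln\frac{b}{a}\right)^{\alpha}$ against the closed-form terms check out in all three branches, so nothing is missing.
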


(i) If $\lambda \leq 1-\delta $, then%
\begin{eqnarray*}
L(\alpha ,\lambda ,\delta ) &=&a^{\delta }b^{1-\delta }\lambda ^{\alpha
}+a^{1-\delta }b^{\delta }\left[ \left( 1-\lambda \right) ^{\alpha }-2\left(
1-\delta \right) ^{\alpha }\right] \\
&&+\frac{\alpha }{\left( \ln \frac{b}{a}\right) ^{\alpha }}\left\{
\dint\limits_{a^{1-\delta }b^{\delta }}^{b}\left( \ln \frac{b}{t}\right)
^{\alpha -1}dt-\dint\limits_{C}^{a^{1-\delta }b^{\delta }}\left( \ln \frac{b%
}{t}\right) ^{\alpha -1}dt-\dint\limits_{a}^{C}\left( \ln \frac{t}{a}\right)
^{\alpha -1}dt\right\}
\end{eqnarray*}

(ii) If $1-\delta \leq \lambda \leq \delta $, then%
\begin{eqnarray*}
&&L(\alpha ,\lambda ,\delta ) \\
&=&a^{\delta }b^{1-\delta }\left[ 2\left( 1-\delta \right) ^{\alpha
}-\lambda ^{\alpha }\right] +a^{1-\delta }b^{\delta }\left[ \left( 1-\lambda
\right) ^{\alpha }-2\left( 1-\delta \right) ^{\alpha }\right] +\frac{\alpha 
}{\left( \ln \frac{b}{a}\right) ^{\alpha }} \\
&&\times \left\{ \dint\limits_{a^{\delta }b^{1-\delta }}^{C}\left( \ln \frac{%
t}{a}\right) ^{\alpha -1}dt-\dint\limits_{a}^{a^{\delta }b^{1-\delta
}}\left( \ln \frac{t}{a}\right) ^{\alpha -1}dt+\dint\limits_{a^{1-\delta
}b^{\delta }}^{b}\left( \ln \frac{b}{t}\right) ^{\alpha
-1}dt-\dint\limits_{C}^{a^{1-\delta }b^{\delta }}\left( \ln \frac{b}{t}%
\right) ^{\alpha -1}dt\right\} .
\end{eqnarray*}

(iii) If $\delta \leq \lambda $, then%
\begin{eqnarray*}
L(\alpha ,\lambda ,\delta ) &=&a^{\delta }b^{1-\delta }\left[ 2\left(
1-\delta \right) ^{\alpha }-\lambda ^{\alpha }\right] -a^{1-\delta
}b^{\delta }\left( 1-\lambda \right) ^{\alpha } \\
&&+\frac{\alpha }{\left( \ln \frac{b}{a}\right) ^{\alpha }}\left\{
\dint\limits_{C}^{b}\left( \ln \frac{b}{t}\right) ^{\alpha
-1}dt+\dint\limits_{a^{\delta }b^{1-\delta }}^{C}\left( \ln \frac{t}{a}%
\right) ^{\alpha -1}dt-\dint\limits_{a}^{a^{\delta }b^{1-\delta }}\left( \ln 
\frac{t}{a}\right) ^{\alpha -1}dt\right\} .
\end{eqnarray*}

\begin{corollary}
In Theorem \ref{2.2}, if we take $x=y=C$, then we have the inequality%
\begin{eqnarray}
&&\left\vert \left[ \lambda ^{\alpha }+(1-\lambda )^{\alpha }\right] f(x)-%
\frac{\Gamma (\alpha +1)}{\left( \ln \frac{b}{a}\right) ^{\alpha }}\left[
J_{x-}^{\alpha }f(a)+J_{x+}^{\alpha }f(b)\right] \right\vert  \label{2-1c} \\
&\leq &M\left\{ \left[ \lambda ^{\alpha }-\left( 1-\lambda \right) ^{\alpha }%
\right] x+\frac{\alpha }{\left( \ln \frac{b}{a}\right) ^{\alpha }}\left[
\dint\limits_{x}^{b}\left( \ln \frac{b}{t}\right) ^{\alpha
-1}dt-\dint\limits_{a}^{x}\left( \ln \frac{t}{a}\right) ^{\alpha -1}dt\right]
\right\} .  \notag
\end{eqnarray}
\end{corollary}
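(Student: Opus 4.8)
The plan is to obtain this corollary as a direct specialization of Theorem \ref{2.2}, setting $x=y=C$ with $C=a^{1-\lambda}b^{\lambda}$. First I would note that $C\in[a,b]$ whenever $\lambda\in[0,1]$, so the hypotheses of Theorem \ref{2.2} are met, and that the choice $x=y=C$ sits on the common boundary of the three cases defining $C_{\alpha,\lambda}$. It is therefore legitimate to evaluate the bound through any one of them; as a sanity check I would confirm that cases (1), (2) and (3) all collapse to the same value at $x=y=C$, and carry out the computation (say) through case (2), where $a\leq x\leq C\leq y\leq b$ degenerates to equalities.

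The first step is to simplify the left-hand side. Substituting $x=y=C$ into $S_f(x,y,\lambda,\alpha,a,b)$ merges the two function evaluations into the single term $[\lambda^{\alpha}+(1-\lambda)^{\alpha}]f(C)$, while the fractional-integral block is already centred at $C$; this reproduces the asserted left-hand side verbatim once $C$ is written as $x$.

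The heart of the argument is the evaluation of $C_{\alpha,\lambda}(C,C)$. Here I would invoke the two logarithmic identities $\ln\frac{C}{a}=\lambda\ln\frac{b}{a}$ and $\ln\frac{b}{C}=(1-\lambda)\ln\frac{b}{a}$, both immediate from $\ln C=(1-\lambda)\ln a+\lambda\ln b$. These reduce the bracket $2(\ln\frac{x}{a})^{\alpha}-\lambda^{\alpha}(\ln\frac{b}{a})^{\alpha}$ to $\lambda^{\alpha}(\ln\frac{b}{a})^{\alpha}$ and the bracket $(1-\lambda)^{\alpha}(\ln\frac{b}{a})^{\alpha}-2(\ln\frac{b}{y})^{\alpha}$ to $-(1-\lambda)^{\alpha}(\ln\frac{b}{a})^{\alpha}$. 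Simultaneously, the integrals $\int_x^C(\ln\frac{t}{a})^{\alpha-1}dt$ and $\int_C^y(\ln\frac{b}{t})^{\alpha-1}dt$ in case (2) vanish because their limits coincide, leaving only $\int_C^b(\ln\frac{b}{t})^{\alpha-1}dt-\int_a^C(\ln\frac{t}{a})^{\alpha-1}dt$. Collecting the terms yields $C_{\alpha,\lambda}(C,C)=\frac{C}{\alpha}[\lambda^{\alpha}-(1-\lambda)^{\alpha}](\ln\frac{b}{a})^{\alpha}+\int_C^b(\ln\frac{b}{t})^{\alpha-1}dt-\int_a^C(\ln\frac{t}{a})^{\alpha-1}dt$.

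Finally I would feed this into the bound $\frac{\alpha M\,C_{\alpha,\lambda}(C,C)}{(\ln\frac{b}{a})^{\alpha}}$ of Theorem \ref{2.2}: the prefactor $\alpha$ cancels the $\frac{1}{\alpha}$ in the algebraic term, the division by $(\ln\frac{b}{a})^{\alpha}$ kills the remaining power, and the result is exactly the claimed right-hand side after renaming $C$ as $x$. I do not expect any genuine obstacle; the only point requiring care is checking that the vanishing integrals and the two logarithmic identities assemble correctly, which is routine bookkeeping rather than a substantive difficulty.
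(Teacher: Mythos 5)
Your proposal is correct and follows exactly the route the paper intends: the corollary is a direct specialization of Theorem \ref{2.2} at $x=y=C$, and your evaluation of $C_{\alpha,\lambda}(C,C)$ via the identities $\ln\frac{C}{a}=\lambda\ln\frac{b}{a}$ and $\ln\frac{b}{C}=(1-\lambda)\ln\frac{b}{a}$, together with the vanishing of the integrals with coincident limits, reproduces the stated bound after the factor $\alpha/(\ln\frac{b}{a})^{\alpha}$ is distributed. The paper gives no explicit proof for this corollary, and your check that the three case formulas agree at the common boundary $x=y=C$ is a sensible (if strictly unnecessary) addition.
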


\begin{remark}
In the inequality (\ref{2-1c}), if we choose $\lambda =\frac{1}{2}$, then we
get the inequality (\ref{3-10}).
\end{remark}

\begin{corollary}
In the inequality (\ref{2-1b}), if we take $\delta =1$, then we have the
following weighted Hadamard-type inequalities for Lipschitzian functions via
Hadamard fractional integrals%
\begin{eqnarray}
&&\left\vert \lambda ^{\alpha }f(a)+(1-\lambda )^{\alpha }f(b)-\frac{\Gamma
(\alpha +1)}{\left( \ln \frac{b}{a}\right) ^{\alpha }}\left[ J_{C-}^{\alpha
}f(a)+J_{C+}^{\alpha }f(b)\right] \right\vert  \label{2-1d} \\
&\leq &M\left\{ b(1-\lambda )^{\alpha }-a\lambda ^{\alpha }+\frac{\alpha }{%
\left( \ln \frac{b}{a}\right) ^{\alpha }}\left[ \dint\limits_{a}^{C}\left(
\ln \frac{t}{a}\right) ^{\alpha -1}dt-\dint\limits_{C}^{b}\left( \ln \frac{b%
}{t}\right) ^{\alpha -1}dt\right] \right\} .  \notag
\end{eqnarray}
\end{corollary}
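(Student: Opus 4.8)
The plan is to obtain \eqref{2-1d} purely by specialization of the inequality \eqref{2-1b} from the preceding corollary, setting $\delta=1$. First I would record what $\delta=1$ does to the data $x=a^{\delta}b^{1-\delta}$ and $y=a^{1-\delta}b^{\delta}$: we get $x=a$ and $y=b$, so the left-hand side of \eqref{2-1b} collapses to exactly $\lambda^{\alpha}f(a)+(1-\lambda)^{\alpha}f(b)-\frac{\Gamma(\alpha+1)}{(\ln\frac{b}{a})^{\alpha}}\left[J_{C-}^{\alpha}f(a)+J_{C+}^{\alpha}f(b)\right]$, which is the left-hand side of \eqref{2-1d}. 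Nothing further is needed on the left, so the entire task reduces to evaluating $L(\alpha,\lambda,1)$.

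Next I would decide which of the three formulas for $L(\alpha,\lambda,\delta)$ applies. The three branches are governed by $\lambda\leq 1-\delta$, $1-\delta\leq\lambda\leq\delta$, and $\delta\leq\lambda$. Since $\delta=1$ forces $1-\delta=0$, the middle branch condition becomes $0\leq\lambda\leq 1$, which holds for every admissible $\lambda$. Hence case (ii) is the one to apply; the endpoint values $\lambda=0$ and $\lambda=1$ are already covered by the closed inequalities in the branch conditions and need no separate treatment.

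The computational heart is then to substitute $\delta=1$ into the case (ii) expression for $L$. I would use $a^{\delta}b^{1-\delta}=a$, $a^{1-\delta}b^{\delta}=b$, and the fact that $(1-\delta)^{\alpha}=0^{\alpha}=0$ for $\alpha>0$. The coefficient brackets then reduce to $2(1-\delta)^{\alpha}-\lambda^{\alpha}=-\lambda^{\alpha}$ and $(1-\lambda)^{\alpha}-2(1-\delta)^{\alpha}=(1-\lambda)^{\alpha}$, producing the boundary terms $-a\lambda^{\alpha}+b(1-\lambda)^{\alpha}$. Among the four integrals, the two with coincident limits, namely $\int_{a}^{a^{\delta}b^{1-\delta}}=\int_{a}^{a}$ and $\int_{a^{1-\delta}b^{\delta}}^{b}=\int_{b}^{b}$, vanish, leaving $\int_{a}^{C}\left(\ln\frac{t}{a}\right)^{\alpha-1}dt$ and $-\int_{C}^{b}\left(\ln\frac{b}{t}\right)^{\alpha-1}dt$, each carrying the prefactor $\frac{\alpha}{(\ln\frac{b}{a})^{\alpha}}$. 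Collecting these terms and multiplying by $M$ reproduces exactly the right-hand side of \eqref{2-1d}.

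The argument is essentially bookkeeping, so I do not expect a genuine obstacle; the only point demanding care is the branch selection, namely verifying that $\delta=1$ lands squarely in case (ii) rather than in a degenerate endpoint, together with keeping track of which two integrals collapse once the exponentials are evaluated. Once those are pinned down, the identity \eqref{2-1d} follows immediately from \eqref{2-1b}.
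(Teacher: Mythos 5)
Your proposal is correct and follows exactly the route the paper intends: the corollary is stated as an immediate specialization of \eqref{2-1b} at $\delta=1$, and your bookkeeping (identifying $x=a$, $y=b$, selecting branch (ii) since $1-\delta=0\leq\lambda\leq 1=\delta$, killing the $2(1-\delta)^{\alpha}$ terms and the two degenerate integrals) reproduces the stated right-hand side verbatim. No gaps.
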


\begin{remark}
In the inequality (\ref{2-1d}), if we choose $\lambda =\frac{1}{2}$, then we
get the inequality (\ref{3-20}).
\end{remark}

\section{Application to Special Means}

Let us recall the following special means of two positive number $a,b$ with $%
b>a:$

\begin{enumerate}
\item The arithmetic mean%
\begin{equation*}
A=A\left( a,b\right) :=\frac{a+b}{2}.
\end{equation*}

\item The geometric mean%
\begin{equation*}
G=G\left( a,b\right) :=\frac{a+b}{2}.
\end{equation*}

\item The harmonic mean%
\begin{equation*}
H=H\left( a,b\right) :=\frac{2ab}{a+b}.
\end{equation*}

\item The logarithmic mean%
\begin{equation*}
L=L\left( a,b\right) :=\frac{b-a}{\ln b-\ln a}.
\end{equation*}

\item The identric mean%
\begin{equation*}
I=I(a,b)=\frac{1}{e}\left( \frac{b^{b}}{a^{a}}\right) ^{\frac{1}{b-a}}
\end{equation*}
\end{enumerate}

To prove the results of this section, we need the following lemma:

\begin{lemma}[\protect\cite{THH12}]
Let $f:\left[ a,b\right] \rightarrow 
\mathbb{R}
$ be differentiable with $\left\Vert f^{\prime }\right\Vert _{\infty
}<\infty .$ Then $f$ is an $M$-Lipschitzian function on $\left[ a,b\right] $
where $M=\left\Vert f^{\prime }\right\Vert _{\infty }.$
\end{lemma}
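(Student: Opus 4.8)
The plan is to derive the Lipschitz estimate directly from the Mean Value Theorem, using the hypothesis that $\Vert f'\Vert_\infty$ is finite to supply the uniform bound $M$. The guiding idea is that differentiability on all of $[a,b]$ lets us control the increment of $f$ on any subinterval by the derivative at a single interior point, and the supremum norm then replaces that point-dependent value by a constant.

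First I would fix two arbitrary points $x,y\in[a,b]$. If $x=y$ the desired inequality $|f(x)-f(y)|\leq M|x-y|$ holds trivially, since both sides vanish, so I may assume $x\neq y$ and, after relabeling if necessary, $x<y$. Because $f$ is differentiable on the whole interval $[a,b]$, it is in particular continuous on the closed subinterval $[x,y]$ and differentiable on the open subinterval $(x,y)$, so the hypotheses of the Mean Value Theorem are satisfied on $[x,y]$.

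Next I would apply the Mean Value Theorem on $[x,y]$ to produce a point $c\in(x,y)$ with $f(y)-f(x)=f'(c)(y-x)$. Taking absolute values, and using the definition $M=\Vert f'\Vert_\infty=\sup_{t\in[a,b]}|f'(t)|$ together with the inclusion $c\in(x,y)\subseteq[a,b]$, I obtain
\begin{equation*}
|f(y)-f(x)|=|f'(c)|\,|y-x|\leq M\,|y-x|.
\end{equation*}
Since $x$ and $y$ were arbitrary in $[a,b]$, this is exactly the $M$-Lipschitz condition of the relevant definition, which establishes the claim.

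There is essentially no genuine obstacle here, and I would expect the entire argument to be a few lines. The only two points warranting a moment's attention are that the Mean Value Theorem is legitimately applicable on every subinterval $[x,y]$, which follows immediately from differentiability (hence continuity) on all of $[a,b]$, and that the pointwise estimate $|f'(c)|\leq M$ is uniform in the unknown point $c$, which is precisely what finiteness of the supremum norm $\Vert f'\Vert_\infty$ guarantees.
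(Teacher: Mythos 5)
Your proof is correct and is the standard Mean Value Theorem argument; the paper itself states this lemma as a citation to \cite{THH12} and gives no proof, so there is nothing to compare against, but your argument is exactly the expected one. The two points you flag (applicability of the MVT on each subinterval and uniformity of the bound $|f'(c)|\leq M$) are precisely the only things that need checking, and you handle both correctly.
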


\begin{proposition}
\label{3.1}For $b>a>0$, $\lambda \in \left[ 0,1\right] $ and $n\geq 1$, we
have.%
\begin{eqnarray}
&&\left\vert \left( 1-\lambda \right) G^{n}(a,b)+\lambda A\left(
a^{n},b^{n}\right) -L\left( a^{n},b^{n}\right) \right\vert  \label{4-1} \\
&\leq &\frac{nb^{n-1}}{\ln b-\ln a}\left[ \frac{\lambda \left( b-a\right) }{2%
}\ln \frac{b}{a}+2\left( 1-2\lambda \right) \left( A\left( a,b\right)
-G\left( a,b\right) \right) \right] .  \notag
\end{eqnarray}
\end{proposition}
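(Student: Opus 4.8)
The plan is to specialize the $\alpha=1$ Hermite--Hadamard type estimate (\ref{3-5}) to the power function $f(t)=t^{n}$ on $[a,b]$. First I would verify that $f$ satisfies the hypotheses needed for (\ref{3-5}): it is differentiable on $(0,\infty)$, and by the Lemma of \cite{THH12} it is $M$-Lipschitzian on $[a,b]$ with $M=\left\Vert f^{\prime}\right\Vert _{\infty}$. Since $f^{\prime}(t)=nt^{n-1}$ is nonnegative and, for $n\geq 1$, nondecreasing on $[a,b]\subseteq(0,\infty)$, its supremum is attained at the right endpoint, whence $M=nb^{n-1}$.

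Next I would compute the three terms on the left-hand side of (\ref{3-5}) for this choice of $f$. One has $f(\sqrt{ab})=(ab)^{n/2}=G^{n}(a,b)$ and $\tfrac12[f(a)+f(b)]=\tfrac12(a^{n}+b^{n})=A(a^{n},b^{n})$. The integral term is the key identification: $\frac{1}{\ln b-\ln a}\int_{a}^{b}\frac{t^{n}}{t}\,dt=\frac{1}{\ln b-\ln a}\int_{a}^{b}t^{n-1}\,dt=\frac{b^{n}-a^{n}}{n(\ln b-\ln a)}=L(a^{n},b^{n})$. Hence the left-hand side of (\ref{3-5}) becomes exactly $\left\vert (1-\lambda)G^{n}(a,b)+\lambda A(a^{n},b^{n})-L(a^{n},b^{n})\right\vert$, the quantity to be bounded.

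Finally, on the right-hand side of (\ref{3-5}) I would substitute $M=nb^{n-1}$ and rewrite the endpoint expression using $a+b-2\sqrt{ab}=2\bigl(\tfrac{a+b}{2}-\sqrt{ab}\bigr)=2\bigl(A(a,b)-G(a,b)\bigr)$. This turns the bound into $\frac{nb^{n-1}}{\ln b-\ln a}\bigl[\frac{\lambda(b-a)}{2}\ln\frac{b}{a}+2(1-2\lambda)(A(a,b)-G(a,b))\bigr]$, which is precisely the stated right-hand side of (\ref{4-1}), completing the argument.

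The only genuine step, as opposed to pure substitution, is pinning down the Lipschitz constant $M=nb^{n-1}$; this rests on the monotonicity of $t\mapsto nt^{n-1}$ on the positive interval $[a,b]$ for $n\geq 1$, so that the supremum of $\lvert f^{\prime}\rvert$ sits at $t=b$. Everything else is a direct evaluation of the means and an algebraic rewriting, so I do not anticipate any real obstacle.
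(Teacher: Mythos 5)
Your proposal is correct and follows exactly the paper's route: the paper's proof is the single sentence that the result follows from inequality (\ref{3-5}) applied to the Lipschitzian function $f(x)=x^{n}$ on $[a,b]$, and you have simply supplied the routine details (the constant $M=nb^{n-1}$ from the monotonicity of $f'$, the identifications $f(\sqrt{ab})=G^{n}(a,b)$, $\tfrac12[f(a)+f(b)]=A(a^{n},b^{n})$, $\tfrac{1}{\ln b-\ln a}\int_{a}^{b}t^{n-1}\,dt=L(a^{n},b^{n})$, and $a+b-2\sqrt{ab}=2(A(a,b)-G(a,b))$) that the paper leaves implicit.
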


\begin{proof}
The proof follows by the inequality (\ref{3-5}) applied for the Lipschitzian
function $f(x)=x^{n}$ on $\left[ a,b\right] $.
\end{proof}

\begin{remark}
Let $\lambda =0$ and $\lambda =1$ in the inequality (\ref{4-1}). Then, using
the inequality (\ref{1-3}), we have the following inequalities respectively%
\begin{equation*}
0\leq L\left( a^{n},b^{n}\right) -G^{n}(a,b)\leq \frac{2nb^{n-1}}{\ln b-\ln a%
}\left( A\left( a,b\right) -G\left( a,b\right) \right) ,
\end{equation*}%
\begin{equation*}
0\leq A\left( a^{n},b^{n}\right) -L\left( a^{n},b^{n}\right) \leq \frac{%
nb^{n-1}}{\ln b-\ln a}\left[ \frac{b-a}{2}\ln \frac{b}{a}-2\left( A\left(
a,b\right) -G\left( a,b\right) \right) \right]
\end{equation*}
\end{remark}

\begin{proposition}
\label{3.2}For $b>a>0$ and $\lambda \in \left[ 0,1\right] ,$ we have%
\begin{eqnarray}
&&\left\vert \left( 1-\lambda \right) G(ae^{a},be^{b})+\lambda A\left(
ae^{a},be^{b}\right) -L\left( e^{a},e^{b}\right) L\left( a,b\right)
\right\vert  \label{4-2} \\
&\leq &\frac{e^{b}\left( b+1\right) }{\ln b-\ln a}\left[ \frac{\lambda
\left( b-a\right) }{2}\ln \frac{b}{a}+2\left( 1-2\lambda \right) \left(
A\left( a,b\right) -G\left( a,b\right) \right) \right] .  \notag
\end{eqnarray}
\end{proposition}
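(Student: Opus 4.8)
The plan is to follow the template of Proposition~\ref{3.1}: specialize inequality~(\ref{3-5}) to a concrete Lipschitzian function. Here the right choice is $f(x)=xe^{x}$ on $[a,b]$. First I would record the Lipschitz constant: since $f'(x)=(x+1)e^{x}$ is positive and increasing on $(0,\infty)$, the Lemma gives that $f$ is $M$-Lipschitzian with $M=\left\Vert f'\right\Vert_{\infty}=(b+1)e^{b}$, which is exactly the prefactor $\frac{e^{b}(b+1)}{\ln b-\ln a}$ appearing on the right-hand side of the statement.

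Next I would evaluate the three pieces of (\ref{3-5}) for this $f$. The integral term is
\begin{equation*}
\frac{1}{\ln b-\ln a}\int_{a}^{b}\frac{f(t)}{t}\,dt=\frac{1}{\ln b-\ln a}\int_{a}^{b}e^{t}\,dt=\frac{e^{b}-e^{a}}{\ln b-\ln a}=L(e^{a},e^{b})\,L(a,b),
\end{equation*}
using $L(e^{a},e^{b})=\frac{e^{b}-e^{a}}{b-a}$ and $L(a,b)=\frac{b-a}{\ln b-\ln a}$. The $\lambda$-weighted endpoint term is $\lambda\frac{f(a)+f(b)}{2}=\lambda\frac{ae^{a}+be^{b}}{2}=\lambda A(ae^{a},be^{b})$, and the bracket on the right of (\ref{3-5}) becomes $\frac{\lambda(b-a)}{2}\ln\frac{b}{a}+(1-2\lambda)(a+b-2\sqrt{ab})$, which is the stated bracket once one writes $a+b-2\sqrt{ab}=2\bigl(A(a,b)-G(a,b)\bigr)$. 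Thus the right-hand side, the integral term, and the $\lambda$-term already reproduce the statement verbatim.

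The remaining — and decisive — piece is the $(1-\lambda)$-weighted leading term. Inequality (\ref{3-5}) contributes $(1-\lambda)f(\sqrt{ab})=(1-\lambda)\sqrt{ab}\,e^{\sqrt{ab}}=(1-\lambda)\,G(a,b)\,e^{G(a,b)}$, that is, $f$ evaluated at the geometric mean $G(a,b)=\sqrt{ab}$. The statement, however, records this term as $(1-\lambda)G(ae^{a},be^{b})=(1-\lambda)\sqrt{ab}\,e^{(a+b)/2}$, since $G(ae^{a},be^{b})=G(a,b)\,G(e^{a},e^{b})$ with $G(e^{a},e^{b})=e^{(a+b)/2}$. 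Matching the two therefore requires $e^{\sqrt{ab}}=e^{(a+b)/2}$, i.e. $\sqrt{ab}=\frac{a+b}{2}$, which by the AM--GM inequality fails for every $a\neq b$; in fact $G(ae^{a},be^{b})=\sqrt{ab}\,e^{(a+b)/2}\ge\sqrt{ab}\,e^{\sqrt{ab}}=f\!\left(\sqrt{ab}\right)$ strictly unless $a=b$.

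This mismatch is exactly the step I expect to be the main obstacle. What the template computation establishes cleanly is the inequality with $f(\sqrt{ab})=G(a,b)e^{G(a,b)}=\sqrt{ab}\,e^{\sqrt{ab}}$ in the leading slot, not with $G(ae^{a},be^{b})$, and replacing the leading term by the strictly larger quantity $G(ae^{a},be^{b})$ is not justified by (\ref{3-5}) at the same right-hand side. To reach the printed statement verbatim I would have to absorb the nonnegative gap $(1-\lambda)\sqrt{ab}\bigl(e^{(a+b)/2}-e^{\sqrt{ab}}\bigr)$ into the stated bound by a separate estimate (for instance, first showing that the variant expression is nonpositive and then checking that adding the gap keeps it within the bound, which here is very loose because of the $e^{b}(b+1)$ factor). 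The cleaner resolution — and the one consistent with the legitimate $G^{n}(a,b)=f(\sqrt{ab})$ term in Proposition~\ref{3.1} — is to read $G(ae^{a},be^{b})$ as a typographical slip for $G(a,b)\,e^{G(a,b)}=\sqrt{ab}\,e^{\sqrt{ab}}=f\!\left(\sqrt{ab}\right)$, under which the template argument yields the inequality immediately.
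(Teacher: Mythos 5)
Your proposal takes exactly the route of the paper's own proof, which is the single line that the result ``follows by the inequality (\ref{3-5}) applied for the Lipschitzian function $f(x)=xe^{x}$ on $\left[ a,b\right]$''; your computations of $M=\left\Vert f^{\prime }\right\Vert _{\infty }=(b+1)e^{b}$, of the integral term $\frac{1}{\ln b-\ln a}\int_{a}^{b}e^{t}dt=L(e^{a},e^{b})L(a,b)$, and of the endpoint term $\lambda A(ae^{a},be^{b})$ simply make explicit what that one-liner tacitly asserts. Your further observation is correct and identifies a genuine defect in the printed statement rather than in your argument: inequality (\ref{3-5}) delivers the leading term $(1-\lambda )f\left( \sqrt{ab}\right) =(1-\lambda )\sqrt{ab}\,e^{\sqrt{ab}}=(1-\lambda )G(a,b)e^{G(a,b)}$, whereas the proposition prints $(1-\lambda )G(ae^{a},be^{b})=(1-\lambda )\sqrt{ab}\,e^{(a+b)/2}$, and these differ strictly for $a\neq b$ since $\sqrt{ab}<\frac{a+b}{2}$; the paper supplies no argument to absorb the nonnegative gap $(1-\lambda )\sqrt{ab}\left( e^{(a+b)/2}-e^{\sqrt{ab}}\right)$, so the statement as printed is not established by its proof. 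Your proposed reading --- that $G(ae^{a},be^{b})$ is a slip for $G(a,b)e^{G(a,b)}$, in analogy with the legitimate $G^{n}(a,b)=f\left( \sqrt{ab}\right)$ in Proposition \ref{3.1} --- is the correct repair, and note that the same slip propagates into the remark following the proposition, where $G(ae^{a},be^{b})$ appears again.
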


\begin{proof}
The proof follows by the inequality (\ref{3-5}) applied for the Lipschitzian
function $f(x)=xe^{x}$ on $\left[ a,b\right] $.
\end{proof}

\begin{remark}
Let $\lambda =0$ and $\lambda =1$ in the inequality (\ref{4-2}). Then, using
the inequality (\ref{1-3}), we have the following inequalities respectively%
\begin{equation*}
0\leq L\left( e^{a},e^{b}\right) L\left( a,b\right) -G(ae^{a},be^{b})\leq 
\frac{2e^{b}\left( b+1\right) }{\ln b-\ln a}\left( A\left( a,b\right)
-G\left( a,b\right) \right) ,
\end{equation*}%
\begin{eqnarray*}
0 &\leq &A\left( ae^{a},be^{b}\right) -L\left( e^{a},e^{b}\right) L\left(
a,b\right) \\
&\leq &\frac{e^{b}\left( b+1\right) }{\ln b-\ln a}\left[ \frac{b-a}{2}\ln 
\frac{b}{a}-2\left( A\left( a,b\right) -G\left( a,b\right) \right) \right]
\end{eqnarray*}
\end{remark}

\begin{proposition}
\label{3.3}For $b>a>0$, $\lambda \in \left[ 0,1\right] $ and $n\geq 1$, we
have.%
\begin{eqnarray}
&&\left\vert \left( 1-\lambda \right) G^{-1}(a,b)+\lambda H^{-1}\left(
a,b\right) -L\left( a,b\right) G^{-2}(a,b)\right\vert  \label{4-3} \\
&\leq &\frac{1}{a^{2}\left( \ln b-\ln a\right) }\left[ \frac{\lambda \left(
b-a\right) }{2}\ln \frac{b}{a}+2\left( 1-2\lambda \right) \left( A\left(
a,b\right) -G\left( a,b\right) \right) \right] .  \notag
\end{eqnarray}
\end{proposition}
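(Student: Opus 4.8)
The plan is to specialize the general inequality (\ref{3-5}) to the particular Lipschitzian function $f(x)=x^{-1}$ on $[a,b]$, exactly in the spirit of the proofs of Propositions \ref{3.1} and \ref{3.2}. (The hypothesis $n\geq 1$ in the statement plays no role in the inequality (\ref{4-3}) and appears to be carried over inadvertently from Proposition \ref{3.1}; I would simply ignore it.)

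First I would verify that $f(x)=1/x$ is $M$-Lipschitzian on $[a,b]$ with $M=a^{-2}$. By the Lemma it suffices to compute $\left\Vert f^{\prime }\right\Vert _{\infty }$ over $[a,b]$. Since $f^{\prime }(x)=-x^{-2}$, we have $\left\vert f^{\prime }(x)\right\vert =x^{-2}$, which is decreasing on $(0,\infty )$ and hence attains its supremum over $[a,b]$ at the left endpoint $x=a$; thus $M=a^{-2}$.

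Next I would evaluate the three terms appearing on the left side of (\ref{3-5}). Using $G(a,b)=\sqrt{ab}$ and $H(a,b)=2ab/(a+b)$ one gets $f(\sqrt{ab})=1/\sqrt{ab}=G^{-1}(a,b)$ and $\tfrac{1}{2}\left( f(a)+f(b)\right) =\tfrac{1}{2}\left( 1/a+1/b\right) =(a+b)/(2ab)=H^{-1}(a,b)$. For the integral term, $\int_{a}^{b}f(t)/t\,dt=\int_{a}^{b}t^{-2}\,dt=(b-a)/(ab)$, so that
\[
\frac{1}{\ln b-\ln a}\int_{a}^{b}\frac{f(t)}{t}\,dt=\frac{b-a}{(\ln b-\ln a)\,ab}=L(a,b)\,G^{-2}(a,b),
\]
where $L(a,b)=(b-a)/(\ln b-\ln a)$ and $G^{-2}(a,b)=1/(ab)$. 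This reproduces the left side of (\ref{4-3}) verbatim.

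Finally, on the right side of (\ref{3-5}) I would substitute $M=a^{-2}$ and rewrite $a+b-2\sqrt{ab}=2\left( A(a,b)-G(a,b)\right) $, which converts the bound into exactly the right side of (\ref{4-3}). There is no genuine obstacle: the argument is a direct substitution into an already-established inequality, and the only point demanding (minor) care is locating the maximum of $\left\vert f^{\prime }\right\vert $ on $[a,b]$ so as to pin down the correct constant $M=a^{-2}$.
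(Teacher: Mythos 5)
Your proposal is correct and follows exactly the paper's own route: the paper's entire proof is the one-line remark that (\ref{4-3}) follows from (\ref{3-5}) applied to the Lipschitzian function $f(x)=1/x$ on $[a,b]$, and you simply supply the routine verifications (that $M=\Vert f^{\prime }\Vert _{\infty }=a^{-2}$, that the three terms on the left of (\ref{3-5}) become $G^{-1}$, $H^{-1}$ and $L\,G^{-2}$, and that $a+b-2\sqrt{ab}=2(A-G)$), all of which check out. Your observation that the hypothesis $n\geq 1$ is vestigial is also accurate.
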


\begin{proof}
The proof follows by the inequality (\ref{3-5}) applied for the Lipschitzian
function $f(x)=1/x$ on $\left[ a,b\right] $.
\end{proof}

\begin{remark}
Let $\lambda =0$ and $\lambda =1$ in the inequality (\ref{4-3}). Then, using
the inequality (\ref{1-3}), we have the following inequalities respectively%
\begin{equation*}
0\leq L\left( a,b\right) -G(a,b)\leq \frac{G^{2}(a,b)}{a^{2}\left( \ln b-\ln
a\right) }\left( A\left( a,b\right) -G\left( a,b\right) \right) ,
\end{equation*}%
\begin{eqnarray*}
0 &\leq &G^{2}(a,b)-L\left( a,b\right) H\left( a,b\right) \\
&\leq &\frac{G^{2}(a,b)H\left( a,b\right) }{a^{2}\left( \ln b-\ln a\right) }%
\left[ \frac{b-a}{2}\ln \frac{b}{a}-2\left( A\left( a,b\right) -G\left(
a,b\right) \right) \right]
\end{eqnarray*}
\end{remark}

\begin{proposition}
For $b>a>0$ and $\lambda \in \left[ 0,1\right] ,$ we have%
\begin{equation*}
\ln G(a,b)\leq \frac{1}{a\left( \ln b-\ln a\right) }\left[ \frac{\lambda
\left( b-a\right) }{2}\ln \frac{b}{a}+2\left( 1-2\lambda \right) \left(
A\left( a,b\right) -G\left( a,b\right) \right) \right]
\end{equation*}
\end{proposition}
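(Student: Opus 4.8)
The plan is to invoke inequality (\ref{3-5}) with the test function $f(x)=\ln x$ on $[a,b]$. Since $f'(x)=1/x$ is positive and decreasing, $\|f'\|_{\infty}=1/a$, so by the Lemma the map $\ln x$ is $M$-Lipschitzian with $M=1/a$; this is exactly the factor $1/\bigl(a(\ln b-\ln a)\bigr)$ standing in front of the right-hand side of the assertion, which confirms that $\ln x$ is the function the statement intends. First I would substitute $f=\ln x$ into the left-hand side of (\ref{3-5}) and evaluate its three constituent terms.

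The key step is a short computation showing that all three terms coincide. The endpoint average is
\[
\frac{f(a)+f(b)}{2}=\frac{\ln a+\ln b}{2}=\ln\sqrt{ab}=\ln G(a,b),
\]
the midpoint value is $f(\sqrt{ab})=\ln\sqrt{ab}=\ln G(a,b)$, and, substituting $u=\ln t$,
\[
\frac{1}{\ln b-\ln a}\int_a^b\frac{\ln t}{t}\,dt=\frac{(\ln b)^2-(\ln a)^2}{2(\ln b-\ln a)}=\frac{\ln a+\ln b}{2}=\ln G(a,b).
\]
This coincidence is just the fact that $\ln x$ is GA-affine: the defining inequality of a GA-convex function holds with equality, so the chain (\ref{1-3}) degenerates to equalities.

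Feeding these values into (\ref{3-5}) gives
\[
(1-\lambda)\ln G(a,b)+\lambda\ln G(a,b)-\ln G(a,b)=0,
\]
so the quantity whose absolute value is bounded in (\ref{3-5}) is identically zero, and the only conclusion the method produces is
\[
0\le\frac{1}{a(\ln b-\ln a)}\Bigl[\frac{\lambda(b-a)}{2}\ln\frac{b}{a}+2(1-2\lambda)\bigl(A(a,b)-G(a,b)\bigr)\Bigr].
\]

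The main obstacle is that this is genuinely all one can extract: because the bounded expression vanishes, (\ref{3-5}) cannot supply the positive lower bound $\ln G(a,b)$ whenever $ab>1$. I do not expect to close this gap, since the stated inequality is in fact false in that regime. Taking $a=1$, $b=e$ and $\lambda=0$ gives $\ln G(1,e)=\tfrac12$, while the right-hand side reduces to $2\bigl(A(1,e)-G(1,e)\bigr)=(1+e)-2\sqrt{e}\approx0.42<\tfrac12$. As $\ln x$ is the only function compatible with the coefficient $1/a$, no application of (\ref{3-5}) can establish the proposition as worded; the conclusion actually available here is the nonnegativity statement $0\le\text{RHS}$ displayed above, and the term $\ln G(a,b)$ on the left appears to arise from dropping the (equal) integral term during the cancellation.
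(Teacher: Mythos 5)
Your proposal is correct, and it follows precisely the route the paper itself takes: the paper's entire proof of this proposition is the single sentence that it follows by the inequality (\ref{3-5}) applied to the Lipschitzian function $f(x)=\ln x$, with $M=\left\Vert f^{\prime }\right\Vert _{\infty }=1/a$ supplied by the Lemma, exactly as you reconstructed. What your computation adds --- and what the paper overlooks --- is that for $f(x)=\ln x$ the three terms appearing in (\ref{3-5}) coincide:
\begin{equation*}
f\left( \sqrt{ab}\right) =\frac{f(a)+f(b)}{2}=\frac{1}{\ln b-\ln a}
\int\limits_{a}^{b}\frac{\ln t}{t}\,dt=\ln G(a,b),
\end{equation*}
so the quantity whose absolute value (\ref{3-5}) bounds vanishes identically, and the substitution yields only the trivial conclusion that the right-hand side is nonnegative. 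Consequently the paper's one-line proof does not establish the proposition as printed, and your counterexample shows no repair is possible: with $a=1$, $b=e$, $\lambda =0$ the left side is $\ln G(1,e)=\frac{1}{2}$ while the right side is $(1+e)-2\sqrt{e}\approx 0.4208$. The stated inequality holds vacuously when $ab\leq 1$ (left side nonpositive) but is false in general.

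So the genuine gap here lies in the paper, not in your argument. The printed left-hand side $\ln G(a,b)$ is evidently a transcription error: most plausibly the author moved the integral term $\frac{1}{\ln b-\ln a}\int_{a}^{b}\frac{\ln t}{t}\,dt=\ln G(a,b)$ across the inequality without noticing that it exactly cancels the combination $\left( 1-\lambda \right) \ln G(a,b)+\lambda \ln G(a,b)$, leaving zero. The only inequality the method delivers is the nonnegativity of the bracket $\frac{\lambda \left( b-a\right) }{2}\ln \frac{b}{a}+2\left( 1-2\lambda \right) \left( A\left( a,b\right) -G\left( a,b\right) \right) $, which is already implicit in Theorem \ref{2.1} since it bounds an absolute value. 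Your observation that $\ln x$ is GA-affine, so that the chain (\ref{1-3}) degenerates to equalities, is the cleanest way to see why this particular test function can never produce a nontrivial left-hand side. (Incidentally, Section 3 of the paper misdefines $G(a,b)$ as $\frac{a+b}{2}$, an obvious slip for $\sqrt{ab}$; your counterexample defeats the proposition under either reading.)
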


\begin{proof}
The proof follows by the inequality (\ref{3-5}) applied for the Lipschitzian
function $f(x)=\ln x$ on $\left[ a,b\right] $.
\end{proof}

\begin{proposition}
For $b>a>e^{-1}$ and $\lambda \in \left[ 0,1\right] ,$ we have%
\begin{eqnarray*}
&&\left\vert \left( 1-\lambda \right) G(a,b)\ln G(a,b)+\lambda \ln
G(a^{a},b^{b})-L\left( a,b\right) \ln I(a,b)\right\vert \\
&\leq &\frac{1+\ln b}{\ln b-\ln a}\left[ \frac{\lambda \left( b-a\right) }{2}%
\ln \frac{b}{a}+2\left( 1-2\lambda \right) \left( A\left( a,b\right)
-G\left( a,b\right) \right) \right]
\end{eqnarray*}
\end{proposition}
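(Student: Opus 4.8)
The plan is to apply inequality (\ref{3-5}) to the auxiliary function $f(x)=x\ln x$ on $[a,b]$, exactly in the spirit of Propositions \ref{3.1}--\ref{3.3}. First I would verify that this $f$ is Lipschitzian with the correct constant. Since $f'(x)=1+\ln x$ and $b>a>e^{-1}$, we have $f'(x)>0$ throughout $[a,b]$, so $f'$ is positive and increasing there; hence $\left\Vert f'\right\Vert_\infty=f'(b)=1+\ln b$. By the Lemma, $f$ is then an $M$-Lipschitzian function with $M=1+\ln b$. It is precisely the hypothesis $a>e^{-1}$ that keeps $f'$ from changing sign on $[a,b]$, so that $M=1+\ln b$ rather than $\max\{|1+\ln a|,\,1+\ln b\}$.

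Next I would match the three terms on the left-hand side of (\ref{3-5}) with the quantities appearing in the statement. For the midpoint term, $f(\sqrt{ab})=\sqrt{ab}\,\ln\sqrt{ab}=G(a,b)\ln G(a,b)$. For the endpoint term, $\tfrac{f(a)+f(b)}{2}=\tfrac12\bigl(a\ln a+b\ln b\bigr)=\tfrac12\ln(a^{a}b^{b})=\ln\sqrt{a^{a}b^{b}}=\ln G(a^{a},b^{b})$. For the integral term, since $\frac{f(t)}{t}=\ln t$, we get $\int_a^b\frac{f(t)}{t}\,dt=\int_a^b\ln t\,dt=b\ln b-a\ln a-(b-a)$; dividing by $\ln b-\ln a$ and invoking the definition $\ln I(a,b)=\frac{b\ln b-a\ln a-(b-a)}{b-a}$ of the identric mean yields $\frac{1}{\ln b-\ln a}\int_a^b\frac{f(t)}{t}\,dt=L(a,b)\ln I(a,b)$.

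Finally I would simplify the right-hand side. Substituting $M=1+\ln b$ into (\ref{3-5}) produces the prefactor $\frac{1+\ln b}{\ln b-\ln a}$, and inside the bracket the expression $(1-2\lambda)(a+b-2\sqrt{ab})$ appears; rewriting $a+b-2\sqrt{ab}=2\bigl(A(a,b)-G(a,b)\bigr)$ converts this into $2(1-2\lambda)\bigl(A(a,b)-G(a,b)\bigr)$, which is exactly the claimed bound. This is a direct specialization rather than a deep argument, so there is no serious obstacle; the only point genuinely requiring care is the evaluation of $M=\left\Vert f'\right\Vert_\infty$, where the constraint $a>e^{-1}$ is essential, together with the closed-form antiderivative $\int\ln t\,dt=t\ln t-t$ used to recognize the integral as $L(a,b)\ln I(a,b)$.
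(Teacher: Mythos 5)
Your proposal is correct and follows exactly the route the paper takes: the paper's proof is the one-line observation that the result follows from inequality (\ref{3-5}) applied to the Lipschitzian function $f(x)=x\ln x$ on $[a,b]$, and you simply supply the details (the evaluation $M=\left\Vert f^{\prime }\right\Vert _{\infty }=1+\ln b$ using $a>e^{-1}$, and the identification of the three terms with $G(a,b)\ln G(a,b)$, $\ln G(a^{a},b^{b})$ and $L(a,b)\ln I(a,b)$) that the paper leaves implicit.
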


\begin{proof}
The proof follows by the inequality (\ref{3-5}) applied for the Lipschitzian
function $f(x)=x\ln x$ on $\left[ a,b\right] $.
\end{proof}

\end{document}